\def\R{\mathbb{R}}
\def\N{\mathbb{N}}
\def\Z{\mathbb{Z}}
\def\d{{\mathrm d}}
\newcommand*{\eps}{\varepsilon}
\newcommand*{\re}{{\mathrm{e}}}
\newcommand*{\ri}{{\mathrm{i}}}
\newtheorem{theorem}{Theorem}
\newtheorem{corollary}[theorem]{Corollary}
\newtheorem{lemma}[theorem]{Lemma}
\newtheorem{proposition}[theorem]{Proposition}
\theoremstyle{remark}
\newtheorem{remark}[theorem]{Remark}
\title[Quasi-convergence of optimal balance by nudging]%
{Quasi-convergence of an implementation of optimal balance by
backward-forward nudging}
\author{G\"okce Tuba Masur}
\author{Haidar Mohamad}
\author{Marcel Oliver}
\address[G.T. Masur]{Institut für Atmosph\"are und Umwelt \\
  Goethe-Universit\"at Frankfurt \\
  60438 Frankfurt/Main \\
  Germany}
\address[H. Mohamad and M. Oliver]{Mathematical Institute for Machine
  Learning and Data Science \\
  KU Eichst\"att--Ingolstadt \\
  85049 Ingolstadt \\
  Germany}
\address[H. Mohamad and M. Oliver]{School of Engineering and Science \\
  Jacobs University \\
  28759 Bremen \\
  Germany}
\date{\today}
\subjclass[2020]{Primary 34E13; Secondary 34B15, 37M21}
\begin{document}

\begin{abstract}
Optimal balance is a non-asymptotic numerical method for computing a
point on an elliptic slow manifold for two-scale dynamical systems
with strong gyroscopic forces.  It works by solving a modified
differential equation as a boundary value problem in time, where the
nonlinear terms are adiabatically ramped up from zero to the fully
nonlinear dynamics.  A dedicated boundary value solver, however, is
often not directly available.  The most natural alternative is a
nudging solver, where the problem is repeatedly solved forward and
backward in time and the respective boundary conditions are restored
whenever one of the temporal end points is visited.  In this paper, we
show quasi-convergence of this scheme in the sense that the
termination residual of the nudging iteration is as small as the
asymptotic error of the method itself, i.e., under appropriate
assumptions exponentially small.  This confirms that optimal balance
in its nudging formulation is an effective algorithm.  Further, it
shows that the boundary value problem formulation of optimal balance
is well posed up at most a residual error as small as the asymptotic
error of the method itself.  The key step in our proof is a careful
two-component Gronwall inequality.
\end{abstract}

\maketitle

\section{Introduction}

Optimal balance is a non-asymptotic numerical method for computing a
point on an elliptic slow manifold for two-scale dynamical systems
with strong gyroscopic forces.  It is based on adiabatically deforming
the system from the full nonlinear vector field to a linear vector
field where the fast-slow splitting can be inferred from the spectrum
of the associated linear operator.  So long as the homotopy between
linear and nonlinear vector field approximately preserves the level of
fast energy, this computation yields a highly accurate approximation
to a point on a nearly invariant slow manifold of the nonlinear
system.  The method is applicable for slow manifolds in the general
sense of MacKay \cite{MacKay04} which may not be exactly invariant as
in classical singular geometric perturbation theory
\cite{DeMaesschalckK:2020:GevreyAP, Fenichel:1979:GeometricSP}.  In
particular, normal hyperbolicity is not required.

While the underlying ideas are much older, optimal balance was
introduced by Vi\'udez and Dritschel \cite{Viudez:04} in the context
of semi-Lagrangian schemes for geophysical fluid flow
\cite{dritschel1997contour,dritschel1999contour}.  The authors
observed excellent nonlinear separation of (slow) Rossby and (fast)
gravity waves.  Cotter \cite{cotter2013data} subsequently pointed out
the connection to the theory of adiabatic invariance, investigated
earlier for a finite dimensional toy model in
\cite{CotterR:2006:SemigeostrophicPM}.  Gottwald \emph{et al.}\
\cite{GottwaldMO:2017:OptimalBA} provide a detailed analysis of the
asymptotic error of optimal balance in the same finite dimensional
setting, but with a finite time horizon for the homotopy between the
nonlinear and linear system.  In this case, additional order
conditions on the ramp function appear and prevent the use of analytic
ramp functions.  Nonetheless, when the ramp function is of Gevrey
class $2$, the balance error can be exponentially small.

The practical numerical implementation of optimal balance requires
solving a boundary value problem in time.  The boundary conditions
are, respectively, absence of fast linear modes at the linear end of
the homotopy, and the requirement that a complementary set of
variables, a ``basepoint coordinate'' parametrizing the slow manifold,
takes a specified value at the nonlinear end.  When the dynamical
system is low-dimensional, a standard solver for boundary value
problems, sometimes even a simple shooting scheme, can be used.  For
large or complex models, this is not practical.  However, optimal
balance can be implemented via backward-forward nudging where the
homotopy is repeatedly integrated backward or forward in time between
the linear and full nonlinear state.  At each temporal boundary, the
respective boundary condition is imposed while the complementary
variables are left unchanged.  This is similar, even though the
precise details differ, to ``backward-forward nudging'' described by
\cite{AurouxN:2012:BackFN}, so that we borrow their terminology.  The
scheme as such was already used by \cite{Viudez:04} and found to work
well.  The key advantage of a backward-forward nudging implementation
of optimal balance is that any existing numerical code can be turned
into an optimal balance solver so long as the mode splitting of the
linearized system is understood, which opens possibilities for
accurate diagnostics even for complex operational atmosphere and ocean
models.

\begin{figure}[tb!] 
  \centering
  \includegraphics[width=0.8\textwidth]{./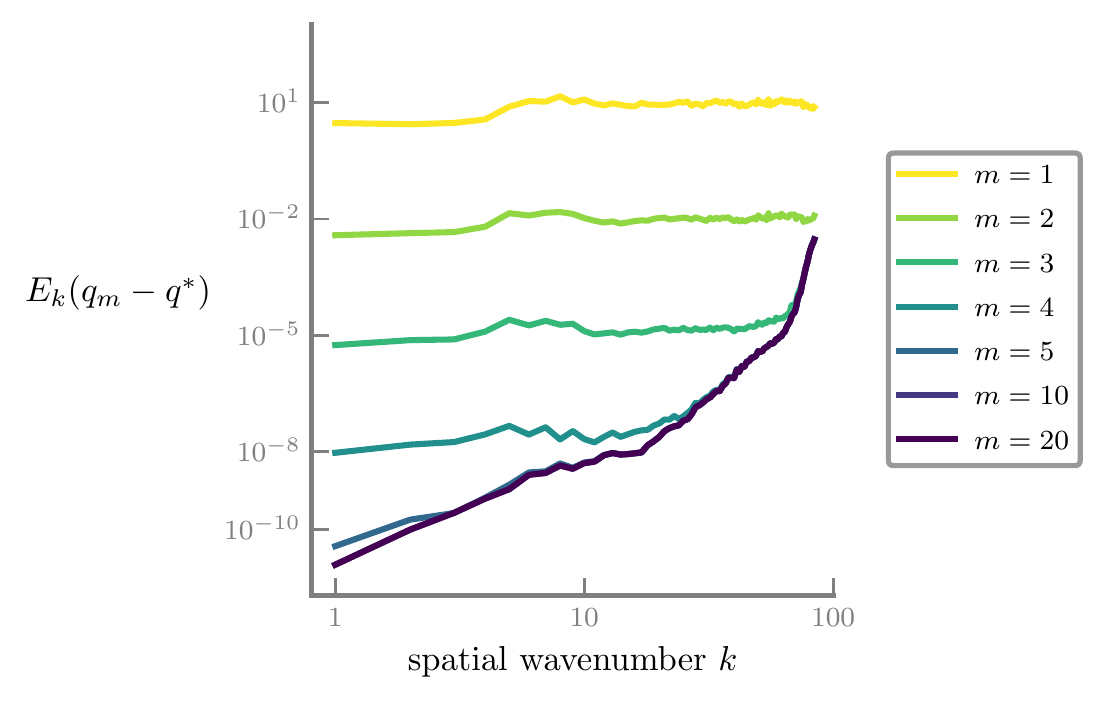}
  \caption{Decrease of the nudging error with the number of nudging
  iterations $m$ in an application of optimal balance to the rotating
  shallow water equations.  Shown is the spectral energy density $E_k$
  of the difference of successive nudging iterates $q_m$ from the
  given basepoint potential vorticity $q^*$ as a function of $k$, the
  modulus of the wavenumber vector.  The Rossby number of this
  simulation is $\eps=0.1$.  Adapted from \cite{Masur2022}.}
  \label{f.spectra}
\end{figure}

Two theoretical questions, however, were left open so far.  First, can
we prove that the optimal balance boundary value problem is well
posed?  Second, if so, can we prove that the implementation by nudging
actually converges to the solution of this boundary value problem? A
detailed numerical study of optimal balance for the rotating shallow
water equations in their primitive variable formulation
\cite{Masur2022,MasurOliver2020} found that, while the method does
return well-balanced states as expected, convergence of the sequence
of nudging iterates to a given basepoint takes place only up to a
small residual that does not go away as the number of iterations grows
large. Figure~\ref{f.spectra} shows typical diagnostic output of a
simulation of a two-dimensional shallow water flow, where the
basepoint variable $q^*$ is the potential vorticity field.  For
selected members of the sequence of nudging iterates, indexed by $m$,
we display the nudging residual as the difference between $q_m$ and
the prescribed basepoint $q^*$ in terms of its spectral energy density
$E_k(q_m-q^*)$, where the scalar total wavenumber $k$ is defined as
the modulus of the two-dimensional wavenumber vector.  The best
residual is reached within five iterations, subsequent iterations show
no further improvement.  The termination residual is very small for
low spatial wavenumbers and grows somewhat larger toward high spatial
wavenumbers.

In this paper, we revisit the issue mathematically in the simpler
context of the finite dimensional model problem that was already used
in related previous studies \cite{CotterR:2006:SemigeostrophicPM,
GottwaldOT:2007:LongTA, GottwaldO:2014:SlowDD}.  Without assuming
well-posedness of the boundary value problem, we split the error into
the termination residual of the iteration and the ``balance error'',
the residual fast energy of the optimal balance formulation as
analyzed in \cite{GottwaldMO:2017:OptimalBA}.  We prove that the
termination residual is small and of the same order of magnitude as
the balance error.  This result, first, provides a rigorous foundation
that optimal balance in a backward-forward nudging implementation is
indeed a very effective algorithm.  Second, it highlights that care
must be taken to find a good termination criterion for the nudging
iteration, cf.\ the discussion in \cite{Masur2022,MasurOliver2020}.
Third, it shows that we can side-step the question of well-posedness
of the optimal balance boundary value problem, assumed but not proved
in \cite{GottwaldMO:2017:OptimalBA}: our result implies that the
problem is well posed at worst up to a residual of the same small
order of magnitude than the overall error of the method.

For small problems, such as the numerical test case studied in
\cite{GottwaldMO:2017:OptimalBA}, it is possible to apply a proper
boundary value problem solver.  The results obtained there suggest
that at least in some cases, the boundary value problem is actually
well-posed and the termination residual can be brought to zero.  It is
likely, though, that the class of problems for which quasi-convergence
in the sense of this paper holds is strictly larger than the class of
problems for which the boundary value problem is well posed in a
strict sense. 

The remainder of the paper is structured as follows.  In
Section~\ref{s.model}, we introduce the model equations and sketch the
standard asymptotic construction of the slow manifold.
Section~\ref{s.ob} introduces optimal balance and extends the
asymptotic construction to the non-autonomous optimal balance
equations.  The key estimates from \cite{GottwaldMO:2017:OptimalBA}
are reviewed in Section~\ref{s.estimates} and adapted to the choice of
parameters required in the context of this paper.  Our new results are
contained in the main Section~\ref{s.nudging}: We introduce the
nudging scheme and estimate the termination residual with a careful
Gronwall estimate over the entire backward-forward integration cycle.
The main result, quasi-convergence of the scheme, is stated as
Theorem~\ref{e.maintheoremAleg} or~\ref{e.maintheoremExp} for the
respective case that algebraic or exponential order conditions are
satisfied.  The paper concludes with a discussion of scope and
possible improvements to our results.  A short appendix recalls a
Gronwall inequality for systems of differential inequalities, one of
the main ingredients in our argument.

\section{The model}
\label{s.model}

As in \cite{CotterR:2006:SemigeostrophicPM, GottwaldOT:2007:LongTA,
GottwaldMO:2017:OptimalBA,GottwaldO:2014:SlowDD}, we consider the
finite-dimensional toy model
\begin{subequations}
  \label{e.finite_Hamilt}
\begin{align}
  \dot{q} & = p \,, \\
  \eps \, \dot{p} & = Jp - \nabla V(q) \,,
\end{align}
\end{subequations}
where $q \colon [0,T] \rightarrow \mathbb{R}^{2d}$ is a vector of
positions $p \colon [0,T] \rightarrow \mathbb{R}^{2d}$ the vector of
corresponding momenta, $\eps$ is the time-scale separation parameter
considered to be small, $J$ is the standard symplectic matrix on
$\R^d$ with $d$ even, and $V$ is a sufficiently smooth potential.

For the purpose of defining and analyzing optimal balance, this system
is particularly easy because the slow subspace of the \emph{linear}
system, i.e.\ when $V=0$, is given by $p=0$.  This, however, is not a
true restriction but rather a convenient choice of variables.  A
hierarchy of slow manifold in the sense of \cite{MacKay04} is then
given by the finite power series expansion
\begin{equation}
  \label{e.G_series}
  p_{\text{slow}} \equiv G_n(q)
  = \sum_{i=0}^{n} g_i(q) \, \eps^i \,,
\end{equation}
where the coefficient vector fields $g_i$ are recursively defined by
\begin{equation}
  \label{e.recursive_g}
  \begin{aligned}
    g_0(q) &= - J \nabla V(q)\,, \\
    g_k(q) & = -J \sum_{i+j=k-1} Dg_i(q) \, g_j(q) \,.
  \end{aligned}
\end{equation}
The series generally does not converge, but it is very easy to prove
that solutions to $\dot q = G_n(q)$ are shadowed by a solution of
\eqref{e.finite_Hamilt} for finite $n$ and finite times; see, e.g.,
\cite{GottwaldOT:2007:LongTA}.  System \ref{e.finite_Hamilt} is
Hamiltonian so that, provided $V$ is analytic, Hamiltonian normal form
theory gives exponentially close shadowing over exponentially long
times \cite{CotterR:2006:SemigeostrophicPM}.

\section{Optimal balance}
\label{s.ob}

Optimal balance provides a numerical procedure to compute an
approximation to the map \eqref{e.G_series} for a given ``basepoint''
$q$ without the need to analytically compute any of the terms on the
right hand side. It works by selectively turning off the nonlinear
term in the equation via a ``ramp function''
$\rho \colon [0,1] \rightarrow [0,1]$ with $\rho(0)=0$ and
$\rho(1)=1$.  In addition, we assume that either
$\rho\in C^{n}([0,1])$ and satisfies the \emph{algebraic order
condition} of order $n \in \N^*$,
\begin{equation}
  \rho^{(i)}(0)=\rho^{(i)}(1)=0
  \quad \text{for } i=1,\dots,n \,,
  \label{e.alg-order}
\end{equation}
alternatively, that $\rho$ is of Gevrey class~2 on $[0,1]$ and
satisfies the \emph{exponential order condition}
\begin{equation}
  \rho^{(i)}(0)=\rho^{(i)}(1)=0
  \quad \text{for all } i \geq 1 \,.
  \label{e.exp-order}
\end{equation}
Recall that a function $f \in C^\infty(U)$ for $U \subset \R$ open is
of Gevrey class $s$, we write $f\in G^s(U)$, if there exist constants
$C$ and $\beta$ such that
\begin{equation}
  \sup_{x\in U} \big| f^{(n)}(x)\big| \leq C \, \frac{n !^s}{\beta^n}
  \label{Gevrey}
\end{equation}
for all $n\in \N$.  Here, we need this estimate to be uniform up to
the boundary of the interval $[0,1]$.  An example of a ramp function
satisfying this condition is
\begin{equation}
  \label{e.rhoexp}
  \rho(\theta) 
  = \frac{e^{-1/\theta}}{e^{-1/\theta} + e^{-1/(1-\theta)}} \,.
\end{equation}
The method of optimal balance requires solving the boundary value
problem in time,
\begin{subequations}
  \label{e.rHamilt}
\begin{align}
  \dot{q} & = p \,, \\
  \eps \, \dot{p} & = Jp -\rho(t/T) \, \nabla V(q) \,,
\end{align}
with boundary conditions
\begin{equation}
  \label{e.rHamilt_bcs}
  p(0)=0 \quad \text{and} \quad q(T) =q^*\,.
\end{equation}
\end{subequations}
The boundary condition at the linear end where $t=0$ expresses the
absence of fast motion, the boundary condition at the nonlinear end
where $t=T$ expresses the matching with a prescribed basepoint
$q^*$.  The approximation to the slow manifold $G$ is then given by
\begin{equation}
  G(q^*)=p(T) \,.
\end{equation}

Optimal balance is nicely illustrated by applying it to the following
classical example of an exact local slow manifold.  This example is
used, for instance, by MacKay \cite{MacKay04} as a starting point for
showing that generic Hamiltonian systems do not possess exactly
invariant slow manifolds.

Take a slow harmonic oscillator, written in action-angle variables
$(I, \theta)$ so that the action $I \equiv 1$ is constant and the
angle $\theta \in \R/2\pi$ has constant rate of change which drives,
via a nonlinear coupling function $f(\theta)$, a fast harmonic
oscillator written in complex representation, namely
\begin{subequations}
  \label{e.example}
\begin{gather}
  \dot \theta = 1 \,, \\
  \eps \, \dot p = \ri p + f(\theta) \,.
\end{gather}
\end{subequations}
The form of this equation is different from \eqref{e.finite_Hamilt},
but can be brought into a more similar form by a change of variables
the details of which do not matter for the point to be made.
Expanding $f$ as a Fourier series,
\begin{equation}
  f(\theta) = \sum_{k \in \Z} f_k \, \re^{\ri k \theta} \,,
\end{equation}
and inserting the ansatz $p=G(\theta)$ into \eqref{e.example}, we find
that the slow manifold is given by
\begin{equation}
  G(\theta)
  = \sum_{k \in \Z} f_k \, \frac{\eps}{\ri(k\varepsilon - 1)} \,
      \re^{\ri k \theta}
  \label{e.direct}
\end{equation}
away from the resonances $\eps = 1/k$.  The corresponding ramped
system for the optimal balance boundary value problem reads
\begin{subequations}
\begin{gather}
  \dot \theta = 1 \,, \\
  \eps \, \dot p = \ri p + \rho(t/T) \, f(\theta) 
  \label{e.example.b} \,.
\end{gather}
\end{subequations}
Using the optimal balance boundary conditions \eqref{e.rHamilt_bcs},
with $\theta$ in place of $q$, we can readily write out the solution
of \eqref{e.example.b} as
\begin{gather}
  p(T)
  = \int_0^T \re^{\ri(T-t)/\eps} \, \rho(t/T) \, f(\theta^*+(t-T))
    \, \d t \,. \label{e.pT} 
\end{gather}
To see that this expression is indeed a good approximation of
$G(\theta^*)$, we must insert the Fourier expansion for $f$ and
integrate by parts:
\begin{align}
  p(T)
  & = \sum_{k \in \Z} f_k
      \int_0^T \re^{\tfrac{\ri (T-t)}\eps + \ri k(\theta^*+(t-T))} \,
      \rho(t/T) \, \d t
      \notag \\
  & = \sum_{k \in \Z} f_k \, \re^{\tfrac{\ri T}\eps + \ri k(\theta^*-T)}
      \int_0^T \re^{(\tfrac{-\ri }\eps + \ri k)t} \,
      \rho(t/T) \, \d t
      \notag \\
  & = \sum_{k \in \Z} f_k \, \re^{\tfrac{\ri T}\eps + \ri k(\theta^*-T)} \,
      \frac{\eps}{\ri (k\eps - 1)}
      \biggl[ 
        \re^{(\tfrac{-\ri }\eps + \ri k)T}
        - \int_0^T  \re^{(\tfrac{-\ri }\eps + \ri k)t} \,
      \frac{\rho'(t/T)}T \, \d t
      \biggr] \,.
  \label{e.ibp}
\end{align}
In the last equality, we have used that $\rho(0)=0$ so that the
boundary term at $t=0$ vanishes.  The contribution from the first term
in brackets coincides with \eqref{e.direct}.  The contribution from
the remainder integral is, at this point, $O(\eps)$ provided that
$\eps$ is suitably bounded away from resonances.  However, its order
in $\eps$ can be improved by further integration by parts: So long as
$\rho^{(i)}(0)=\rho^{(i)}(T) = 0$ for $i= 1, \dots, n$, the boundary
terms of the $(n+1)$st integration by parts vanish and the
contribution from the remaining integral is $O(\eps^{n+1})$.  Thus,
\begin{equation}
  p(T) = \sum_{k \in \Z} f_k \, \frac{\eps}{\ri (k\eps - 1)} \,
         \re^{\ri k\theta^*} + O(\eps^{n+1}) \,.
\end{equation}
To get exponential estimates, some assumption on the growth of
derivatives of $\rho$ is necessary.  The corresponding analysis could
proceed along the lines of \cite{GottwaldMO:2017:OptimalBA}.

We conclude that optimal balance is consistent in the case where an
exact slow manifold (locally) exists, but introduces a small error
even there.  The interesting case, however, is when an exactly
invariant slow manifold does not exist, e.g., when the action of the
slow oscillator in \eqref{e.example} is not constant in time but
varies across one or more of the resonances.  Then, especially for
Hamiltonian systems, exponentially accurate normal forms are the best
we can expect.  Optimal balance works in both cases and can be
exponentially accurate, whether or not an exactly invariant slow
manifold exists.

\section{Remainder estimates}
\label{s.estimates}

The analysis of optimal balance in \cite{GottwaldMO:2017:OptimalBA}
works by constructing an asymptotic expansion analogous to
\eqref{e.G_series} for the ramped system.  This expansion defines a
time-dependent manifold on which the non-autonomous dynamics of
\eqref{e.rHamilt} remains predominantly slow.

The explicit form of the expansion is only used for the theoretical
analysis, and reads
\begin{equation}
  \label{e.F_series}
  p_{\text{slow}} \equiv F_n(q,t)
  = \sum_{i=0}^{n} f_i(q,t) \, \eps^i \,,
\end{equation}
where the coefficient vector fields $f_i$ are recursively defined by
\begin{equation}
  \label{e.recursiveq}
  \begin{aligned}
    f_0(q,t) &= - \rho(t/T) \, J \nabla V(q)\,, \\
    f_k(q,t) & = -J\partial_tf_{k-1}
    -J \sum_{i+j=k-1} Df_i(q,t) \, f_j(q,t) \,,
  \end{aligned}
\end{equation}
for $k=1,\dots,n$.  An $n$-term approximation to the fast component
of the motion is then given by
\begin{equation*}
  w(t) = p(t) - F_{n}(q,t)
\end{equation*}
(we suppress the dependence of $w$ on $n$ to keep the notation simple), and
the model \eqref{e.rHamilt} can be rewritten in $q$-$w$ variables as
\begin{subequations}
  \label{e.fs_ode}
  \begin{align}
    \dot{q} & = F_{n} + w \,, \label{eq:fs_ode_q} \\
    \dot{w} & = \Bigl( \frac{1}{\eps}J - DF_{n} \Bigr) \, w
      + R_{n}(q) \label{eq:fs_ode_w} \,,
  \end{align}
with remainder given by
\begin{equation}
  R_{n}(q)
  = - \eps^{n} \, \partial_t f_{n}(q)
    - \sum_{s = n}^{2n} \eps^s
      \sum_{\substack{i+j =s\\ i,j \leq n}} D f_i(q) f_j(q) \,
\end{equation}
(we suppress the explicit dependence of $R_n$, $F_n$, and $f_i$ on
time $t$ to keep the notation simple), and where the boundary
conditions \eqref{e.rHamilt_bcs} now read
\begin{equation}
  \label{e.fs_ode_bcs}
  w(0)=0 \quad \text{and} \quad q(T)=q^*.
\end{equation}
\end{subequations}

In the following, we review estimates on the slow vector field and on
the remainder of the asymptotic series.  When the series defined in
(\ref{e.F_series}--\ref{e.fs_ode}) is truncated at a fixed order $n$,
these estimates are straightforward and stated as
Lemma~\ref{LemAlgFDFR} below.  When we go for exponential estimates,
we need an optimal truncation of the asymptotic series.  Full details
can be found in \cite{GottwaldMO:2017:OptimalBA};
Lemma~\ref{LemExpFDFR} below states these results in the form required
here, together with only a sketch of the proof.

\begin{lemma} \label{LemAlgFDFR}
Let $B(0, r) \subset \R^{2d}$ for some $r>0$ and fix $T_1>0$.  Assume
that $\rho\in C^{n}([0,1])$ and $V\in C^{n+1}(B(0, r))$ for some
$n \in \N$.  Then there exist constants $C_1$, $C_2$, and $C_3$
such that for any $0<\eps <T \leq T_1$ and $t \in [0,T]$,
\begin{subequations}
  \label{AlgFDFR}
\begin{gather}
  \|F_{n}(q_1,t)- F_{n}(q_2,t)\| \leq C_1 \, \|q_1- q_2\| \,,
  \label{AlgF} \\
  \|DF_{n}(q_1,t) - DF_{n}(q_2,t)\| \leq C_2 \, \|q_1- q_2\| \,,
  \label{AlgDF}
\intertext{and}
  \|R_{n}(q,t)\| \leq C_3 \, \Bigl( \frac{\eps}T \Bigr)^{n}
  \label{AlgR}
\end{gather}
\end{subequations}
for any $q_1$,  $q_2$, and $q \in B(0, r)$.  The constants may depend
on $\rho$, $V$, $n$, and $T_1$, but are independent of $\eps$ and $T$.
\end{lemma}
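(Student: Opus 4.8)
The plan is to prove each estimate in \eqref{AlgFDFR} directly from the recursive definition \eqref{e.recursiveq} of the coefficient vector fields $f_i$ and the explicit formula for the remainder $R_n$, exploiting that everything is built from finitely many derivatives of $\rho$ and $V$ on a compact set. First I would establish by induction on $i = 0, \dots, n$ that each $f_i(\,\cdot\,, t)$ and the relevant derivatives $Df_i$, $\partial_t f_i$, and $D^2 f_i$ (the last needed for \eqref{AlgDF}) are bounded uniformly on $B(0,r) \times [0,T_1]$ by constants depending only on $\rho$, $V$, $n$, and $T_1$. The base case $f_0 = -\rho(t/T)\,J\nabla V(q)$ is immediate since $\|\rho\|_\infty \le 1$, $\nabla V \in C^n(B(0,r))$ is bounded together with its first $n$ derivatives, and $\partial_t f_0 = -T^{-1}\rho'(t/T)\,J\nabla V$ is bounded because $T \ge \eps$ is not needed here — rather, we use $T \le T_1$ only for the upper range; the factor $1/T$ is potentially large as $T \to 0$, so I must be careful. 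Indeed every $t$-derivative in \eqref{e.recursiveq} contributes a factor $1/T$, and after $k$ steps one collects up to $T^{-k}$; since we only claim bounds for $t \in [0,T]$ with $0 < \eps < T \le T_1$ and the constants are allowed to depend on $T_1$ but must be independent of $T$, this is a genuine issue. The resolution is that in \eqref{AlgF} and \eqref{AlgDF} the constants $C_1, C_2$ are permitted to depend on $T_1$ but we need them independent of $T \le T_1$: so I would instead prove bounds of the form $\|f_i\|_{C^2} \le \tilde C_i\, (\eps/T)^{?}\dots$ — but that is not what is claimed either. Re-examining: the claim \eqref{AlgF}--\eqref{AlgDF} has no $\eps/T$ factor, so the correct statement is that one absorbs the $1/T$ powers using $T > \eps$, i.e. $1/T < 1/\eps$, which is \emph{not} bounded. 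Hence the right reading is that the $f_i$ bounds do carry negative powers of $T$, but these are harmless for \eqref{AlgF}--\eqref{AlgDF} because... no.

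Let me restart the bookkeeping cleanly. The honest approach: prove by induction that there are constants $M_i$, depending only on $\rho, V, n, T_1$, with
\begin{equation*}
  \|f_i(q,t)\| + \|Df_i(q,t)\| + \|D^2 f_i(q,t)\| + \|\partial_t f_i(q,t)\|
  \le \frac{M_i}{T^i}
\end{equation*}
for all $q \in B(0,r)$, $t \in [0,T]$, $0 < T \le T_1$. The base case holds with $M_0$ depending on $\|\nabla V\|_{C^2}$, using $\|\rho^{(j)}(t/T)\| \le \|\rho^{(j)}\|_\infty$ (finite since $\rho \in C^n$, $j \le n$). The inductive step: $f_k = -J\partial_t f_{k-1} - J\sum_{i+j=k-1} Df_i\, f_j$; differentiating $\partial_t f_{k-1}$ in $t$ costs another $1/T$ (from the chain rule in $\rho(t/T)$), giving the $T^{-k}$ scaling, and the product terms combine $T^{-i}T^{-j} = T^{-(k-1)} \le T^{-k}$ since $T \le T_1$ means $T^{-(k-1)} \le T_1\, T^{-k}$. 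Then $F_n = \sum_{i=0}^n f_i \eps^i$, and since $\eps < T$ we get $\eps^i/T^i < 1$, so $\|F_n\|_{C^2} \le \sum_i M_i =: $ const, giving \eqref{AlgF} via the mean value theorem applied to $DF_n$ (bounded) and \eqref{AlgDF} via $D^2 F_n$ bounded. That is exactly where the hypothesis $\eps < T$ is consumed — it is essential and must be flagged. Finally \eqref{AlgR}: from $R_n = -\eps^n \partial_t f_n - \sum_{s=n}^{2n}\eps^s \sum_{i+j=s,\, i,j\le n} Df_i\, f_j$, the first term is bounded by $\eps^n M_n/T^n = M_n (\eps/T)^n$, and each product term by $\eps^s M_i M_j / T^{i+j} = \eps^s M_i M_j/T^s = M_i M_j (\eps/T)^s \le M_i M_j (\eps/T)^n$ since $\eps/T < 1$ and $s \ge n$; summing the finitely many terms gives \eqref{AlgR} with $C_3 = M_n + \sum M_i M_j$.

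The main obstacle, then, is purely organizational: setting up the induction hypothesis with the correct $T^{-i}$ weight so that the product structure closes and the factor $T^{-(k-1)} \le T_1 T^{-k}$ is handled, and then recognizing that the hypothesis $0 < \eps < T$ is precisely what converts these $T$-singular bounds into the clean $\eps$-independent bounds \eqref{AlgF}--\eqref{AlgDF} and the $(\eps/T)^n$ bound \eqref{AlgR}. No hard analysis is involved — only smoothness of $\rho$ and $V$ on compact sets and the Leibniz and chain rules — but care is needed to track the power of $1/T$ generated at each level of the recursion and to confirm nothing worse than $(\eps/T)^n$ survives in the remainder. I would present the weighted induction as a single displayed claim, verify the base case and inductive step in one short paragraph each, and then derive the three conclusions of the lemma in a final paragraph.
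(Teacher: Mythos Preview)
Your approach is essentially the paper's own: show by induction that each $f_k$ carries at most a factor $T^{-k}$, use $\eps < T \le T_1$ to bound $\eps^k f_k$ uniformly for the Lipschitz estimates on $F_n$ and $DF_n$, and then factor $(\eps/T)^n$ out of $R_n$.  The paper's proof is much terser but follows exactly this outline; your only slip is that $\partial_t f_i$ picks up $T^{-(i+1)}$ rather than $T^{-i}$ (one more chain-rule factor than the spatial derivatives), a bookkeeping point the paper also glosses over.
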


\begin{proof}
Estimates \eqref{AlgF} and \eqref{AlgDF} follow directly from the
local Lipschitz property of $V$ and its higher order derivatives.
Note that the terms of $F_k$ contain powers of $T^{-1}$ at most up to
order $k$.  Thus, all constants in such terms can be bounded by
$C(T_1) \, (\eps/T)^k \leq C(T_1)$.  To prove \eqref{AlgR}, it
suffices to factor out $(\eps/T)^{n}$ from $R_{n}(q)$, then use the
available bounds on $\rho$ and $V$ and their derivatives together with
the assumption $\eps < T< T_1$.
\end{proof}

To obtain exponential estimates, we assume that $V$ is analytic and
$\rho$ is of Gevrey class~2. 

\begin{lemma}\label{LemExpFDFR}
Fix $r > r' >0$ and $T_1>0$. Assume that $\rho \in G^2(0,1)$ and $V$
is analytic on $B(0, r)$. Then there exist positive constants
$C_1$, $C_2$, $C_3$, $c$, and $\gamma$, each depending only on $\rho$, $V$,
and $T_1$, such that if
\begin{equation}
  n = \Big\lfloor \frac{\gamma \, \eps}T \Big\rfloor \,,
\end{equation}
then
\begin{subequations}
  \label{ExpFDFR}
\begin{gather}
  \|F_{n}(q_1)- F_{n}(q_2)\| \leq C_1 \, \|q_1- q_2\| \,,
  \label{ExpF}\\
  \|DF_{n}(q_1)- DF_{n}(q_2)\| \leq C_2 \, \|q_1- q_2\| \,,
  \label{ExpDF}
\intertext{and}
  \|R_{n}(q)\| \leq C_3 \, \re^{-c\sqrt[3]{\frac{T}{\eps}}}
  \label{ExpR}
\end{gather}
\end{subequations}
for $0 < \eps < T \leq T_1$ and all $q_1, q_2, q \in B(0, r')$.
\end{lemma}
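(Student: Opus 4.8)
The proof follows the optimal-truncation argument of \cite{GottwaldMO:2017:OptimalBA}, adapted to the regime at hand in which $\eps$ and $T$ vary with $\eps/T \to 0$ while $T \le T_1$; here we only sketch the structure. As in Lemma~\ref{LemAlgFDFR}, the bounds \eqref{ExpF}, \eqref{ExpDF}, and \eqref{ExpR} all reduce to quantitative control of the coefficient vector fields $f_k$ from the recursion \eqref{e.recursiveq} and of their spatial derivatives; the new point is that the truncation index $n$ grows as $\eps/T \to 0$, so every constant must be tracked explicitly in $k$. The first step is to prove, by induction on $k$ on a nested family of balls $B(0,r_k)$ with $r = r_0 > r_1 > \dots > r_n > r'$ and spacing $r_{k-1} - r_k$ of order $(r - r')/n$, a bound of the schematic form $\sup_{B(0,r_k)}\|f_k\| \le A^{k+1}\,(k!)^3\,T^{-k}$, the precise version keeping separate track of the powers of $1/T$ produced by each time derivative of $\rho$. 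The factor $(k!)^3$ is the product of two essentially independent contributions: the convolution term $-J\sum Df_if_j$ is combinatorially of Catalan, hence geometric, type, but since $V$ is only analytic the nested spatial derivatives must be handled by Cauchy estimates on the shrinking balls, which forces a cumulative loss of order $n! \sim n^n$; and the term $-J\partial_t f_{k-1}$ repeatedly differentiates $\rho$ in time, and since $\rho$ is only of Gevrey class~$2$, $\rho^{(k)}$ is controlled only by $(k!)^2$, contributing the remaining square of a factorial; the rest of the combinatorics is absorbed into the base $A$. This induction is the step I expect to be the main obstacle: one must estimate the convolution sum and the time-derivative term simultaneously, distribute the $\rho$-derivatives over products without double counting, and choose the shrinking rate of the $r_k$ so that the loss in each Cauchy estimate is exactly absorbed by the factorial budget, all while keeping $A$, and hence every resulting constant, dependent only on the Gevrey constant $\beta$ of $\rho$ in \eqref{Gevrey}, on the radius of analyticity and the sup-norms of the derivatives of $V$, and on $T_1$.

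Given this bound, set $a_k = \eps^k\sup_{B(0,r')}\|f_k\|$, so that $a_{k+1}/a_k \sim A\,(k+1)^3\,\eps/T$; this ratio is at most $1$ precisely for $k$ below the threshold $(T/(A\eps))^{1/3}$, so choosing a suitable $\gamma = \gamma(\rho,V,T_1)$ and taking $n$ to be the largest integer not exceeding $\gamma\,(T/\eps)^{1/3}$ makes $(a_k)_{k=0}^n$ non-increasing, in fact geometrically decaying away from its upper end. Hence $\sum_{k=0}^n a_k$ is bounded by a constant independent of $\eps$ and $T$, and the same remains true after one or two further Cauchy estimates carried out inside the fixed gap between $r$ and $r'$; this gives uniform bounds on $F_n$, $DF_n$, and $D^2F_n$ on $B(0,r')$, from which \eqref{ExpF} and \eqref{ExpDF} follow exactly as in Lemma~\ref{LemAlgFDFR}.

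For the remainder \eqref{ExpR}, insert the coefficient bound into the explicit form of $R_n$ in \eqref{e.fs_ode}. The dominant contribution is $\eps^n\,\partial_t f_n$; running the induction one step further and applying Stirling's formula bounds it by $(A\eps/T)^n\,(n!)^3$ up to factors polynomial in $n$ and $1/\eps$, and for the optimal $n \asymp (T/\eps)^{1/3}$ one has $(A\eps/T)^n(n!)^3 \asymp \bigl((A\eps/T)(n/e)^3\bigr)^n \asymp \re^{-3n}$, which yields the claimed bound $C_3\,\re^{-c\sqrt[3]{T/\eps}}$ once the polynomial prefactors are absorbed into a slightly smaller $c$. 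The double sum $\sum_{s=n}^{2n}\eps^s\sum_{i+j=s}Df_if_j$ is estimated in the same way and is no larger, since each of its terms carries a factor $\eps^s$ with $s \ge n$. Collecting the constants, each of which is by construction a function of $\rho$, $V$, and $T_1$ only, completes the proof.
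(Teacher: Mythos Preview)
Your sketch is correct and arrives at the same optimal truncation $n \asymp (T/\eps)^{1/3}$ and the same exponential remainder bound as the paper, but the organization of the core estimate differs. You run a single induction on $k$ over a nested family of balls, tracking a combined bound $\|f_k\| \lesssim A^{k+1}(k!)^3 T^{-k}$ in which the three factorials arise jointly from the Cauchy estimates and the Gevrey-2 growth of $\rho$. The paper instead exploits the multiplicative structure of the recursion \eqref{e.recursiveq} to write each $f_k$ (and each term of $R_n$) as an inner product $\langle \mathcal{R}_k(\rho), \mathcal{F}_k(V)\rangle$ of a coefficient vector depending only on $\rho$ with one depending only on $V$; the two factors are then estimated separately, the first by the Gevrey bound \eqref{Gevrey} giving $(k+1)!^2/\beta^{k+1}$, the second by a single Cauchy estimate giving $(n/\gamma)^k$, and Stirling recombines these into $n^{3k}/\alpha^k$. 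This decoupling, borrowed from \cite[Lemma~6]{GottwaldMO:2017:OptimalBA}, sidesteps precisely the difficulty you flag---distributing $\rho$-derivatives over products without double counting while simultaneously managing the shrinking-ball losses---at the cost of introducing the somewhat ad hoc coefficient vectors $\mathcal{R}_k$, $\mathcal{F}_k$. Your direct induction is more self-contained and closer to standard Gevrey-asymptotics arguments; the paper's factorization is cleaner bookkeeping once one accepts the structural lemma from \cite{GottwaldMO:2017:OptimalBA}. Either route yields \eqref{ExpFDFR} with the stated dependence of the constants.
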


\begin{proof}
For every $ n\in \N$,  \cite[Lemma~6]{GottwaldMO:2017:OptimalBA}
together with a Cauchy estimate asserts that 
\begin{subequations}\label{CauchyEstim}
\begin{gather}
  \|F_{n}(q_1)- F_{n}(q_2)\|
  \leq \frac{1}{r_1-r} \,
       \sup_{q \in B(0, r_1)}\|F_n(q)\| \, \|q_1- q_2\| \,, \\
  \|DF_{n}(q_1)- DF_{n}(q_2)\|
  \leq \frac{1}{(r_2- r)(r_3-r_2)} \,
       \sup_{q \in B(0, r_3)}\|F_n(q)\|\, \|q_1- q_2\| 
\end{gather}
\end{subequations}
for $r'< r_1, r_2 < r_3 < r$.  Fixing  $r_3$, we then need to prove
that there exists $n = n(\rho, V, T_1, \frac \eps T)$  such that
\eqref{ExpR} is satisfied and $\sup_{q \in B(0, r_3)}\|F_n(q)\|$ is bounded 
independent of $n$.    We briefly sketch the
main ideas:  The  key observation is that each of the terms appearing
in the expression for $R_n$ and $F_n$ is a product of functions which
depend only on $\rho$ and functions which depend only on $V$.  Hence,
each can be written as an inner product of a coefficient vector
encoding all $\rho$-dependence with a coefficient vectors encoding all
$V$-dependence as follows:
\begin{subequations}
\begin{gather}
  R_{n} = J \sum_{k = n}^{2n} \frac{\eps^k}{T^k} \,
  \langle \mathcal{R}_{k+1}(\rho),  \mathcal{F}_{k+1} (V) \rangle
\intertext{and}
  F_{n} = \sum_{k = 0}^{n} \frac{\eps^k}{T^k} \,
  \langle \mathcal{R}_k(\rho),  \mathcal{F}_k(V) \rangle \,.
\end{gather}
\end{subequations}
A H\"{o}lder-like inequality will bound each inner product, so that we
can estimate each class of coefficients separately in its respective
norm.  Indeed, for the $\rho$-dependent vectors, we use the Gevrey
estimate \eqref{Gevrey},
\begin{equation}\label{rhoVecEstim}
  |\mathcal{R}_k(\rho)| \leq  C \frac{(k+1)!^2}{\beta ^{k+1}},
\end{equation}
where the constants $C$ and $\beta$ depend only on $\rho$ and $T_1$.
On the other hand,  using a Cauchy estimate,   there exist constants
$C,  \gamma >0$ depending on $V$  (and implicitly on $B(0, r)$) such that 
\begin{equation}\label{VVecEstim}
  \|\mathcal{F}_k (V)\|_{B(0, r_3)}
  \leq C \, \Bigl( \frac{n}{\gamma} \Bigr)^k \,,
\end{equation}
where $\|\cdot\|_{B(0, r_3)} $ refers to the supremum norm on
$B(0, r_3)$.  Combining \eqref{rhoVecEstim} with \eqref{VVecEstim} and
using the Stirling inequality in the form
$m! \leq \re^{m-1} \, m^{m+ 1/2}$ for every $m \geq 2$, there exist
constants $C, \alpha >0$ depending only on $\rho$, $V$, and $T_1$ such
that
\begin{equation}
  \|R_{n}\|_{B(0, r_3)}
  \leq C \, \sum_{k = n}^{2n} \frac{\eps^k}{T^k} \, \frac{n^{3k}}{\alpha^k}
  \leq C \, \frac{\delta^{n}}{1-\delta} \,,
\end{equation}
where we have bounded the sum by the corresponding infinite geometric
series under the assumption that
$\delta \equiv \frac{\eps n^3}{\alpha T} <1$, and similarly
\begin{equation}
  \|F_{n}\|_{B(0, r_3)} \leq  C \, \frac{1-\delta^{n+1}}{1-\delta} \,.
\end{equation}
An optimization of the overall bound leads to the choice
\begin{equation}
  n = \Big\lfloor \Bigl(
        \frac{\alpha \delta T}{\eps}
      \Bigr)^{\frac 13} \Big\rfloor \,,
\end{equation}
so that
\begin{subequations}
\begin{gather}
  \|F_{n}\|_{B(0, r_3)} \leq \frac{C}{1-\delta}
\intertext{and}
  \|R_{n}\|_{B(0, r')}
  \leq \frac{C}{1-\delta} \, \delta^{(\alpha \delta T /\eps)^{\frac13}}
  \leq C_3 \, \re^{-c \sqrt[3]{\frac T \eps}}
\end{gather}
\end{subequations}
where, in the last inequality, we have fixed $\delta \in (0, 1)$ such
that the final constants are positive.
\end{proof}

\begin{remark}
The proof as written above differs from the proof given in \cite[Proof
of Theorem~4]{GottwaldMO:2017:OptimalBA} in two trivial respects: The
analysis in \cite{GottwaldMO:2017:OptimalBA} is done with respect to
the slow time $\tau \equiv \eps t$ and considers a sequence of ramp
times $T = 1/\eps$ for which $\rho(t/T) = \rho(\tau)$.  With this
choice, the ramp time $T$ does not appear in the formulas for
$\mathcal{R}_k(\rho)$.  Here, we use fast time $t$, so that
time-derivatives of $\rho(t/T)$ which appear in the expressions for
$\mathcal{R}_k(\rho)$ have coefficients proportional to $T^{-i}$,
$i = 0, \dots, k$.  To account for this, we changed the definition of
$\mathcal{R}_k(\rho)$, multiplying by $T^k$, so that the bound in
\eqref{rhoVecEstim} is uniform in $T \in (0, T_1]$.  The second
difference is that we estimate $F_n$ and $R_n$ on $B(0, r_3)$ and
$B(0, r')$, only requiring that $V$ is analytic on a larger ball of
radius $r > r_3>r'$ without an explicit requirement on their relative
sizes.
\end{remark}

\section{The nudging scheme}
\label{s.nudging}

To introduce the backward-forward nudging scheme, we write $q^*$ to
denote the prescribed basepoint coordinate, as before, and take an
initial guess $p_0$ for the value of the fiber coordinate
$p = G(q^*)$. E.g., $p_0=0$ or, if explicitly available,
$p_0=G_0(q^*)$, cf.\ \eqref{e.G_series}. We then construct a sequence
of approximates, $p_m$, in the following way.

Let $q^-_m$ and $p^-_m$ denote the solution to the ramped system
(\ref{e.rHamilt}a,b) \emph{backward} in time, endowed with the final
condition
\begin{equation}
  q^-_m(T) = q^* \quad \text{and} \quad p^-_m(T) = p_m \,.
\end{equation}
We stop the backward integration at $t=0$ and initialize a
\emph{forward} solution, $q^+_m$ and $p^+_m$, to (\ref{e.rHamilt}a,b)
with initial condition
\begin{equation}
  q^+_m(0) = q^-_m(0) \quad \text{and} \quad p^+_m(0) = 0 \,.
\end{equation}
This solution is stopped at $t=T$ and we set
\begin{equation}
  p_{m+1} = p^+_m(T) \,.
\end{equation}
In the following, we prove quasi-convergence of the sequence
$\{ p_m \}$ in the case of the algebraic order condition and the
exponential order condition.

To analyze the scheme, it is convenient to consider the ramped system
in terms of the slow-fast variables as in \eqref{e.fs_ode}, writing
$w^-_m(t)$ and $w^+_m(t)$ for the fast variable in the backward and
forward integration steps, respectively.  We emphasize that this
choice relates to the theoretical analysis of the method only.
Numerically, $w^-_m$ and $w^+_m$ are not available.  Similarly, we
write $w_m$ to denote the fast variable at the \emph{start} of the
backward-forward integration cycle.  I.e.,
\begin{equation}
  w_{m+1} = p^+_{m}(T) - F_n(q^*,T)
  \quad \text{for $m \geq 0$}
\end{equation}
and
\begin{equation}
  w_0 = p_0 - F_n(q^*,T) \,.
\end{equation}
(As before, we suppress the implicit dependence of $w_m$ on $n$.)

Figure~\ref{f.scheme} summarizes the nudging cycle which, in our
notation, starts at the top right and goes anticlockwise.  The
horizontal arrows indicate the backward/forward integration steps of
the ramped system along which the fast energy is adiabatically
preserved. The adjustment of the phase point at the linear end is of
size $\lVert w_m \rVert$ by construction.  At the nonlinear end, when
the basepoint is restored, the adjustment in the $q$ component, due to
the structure of equation \eqref{eq:fs_ode_q}, is very roughly of size
$\lVert w_m \rVert \, T$, so can be made small by choosing $T$
sufficiently small.  The precise statements and proofs are the
following.

\begin{figure}[tb!] 
  \centering
\begin{tikzcd}[column sep=6cm,
               row sep = huge]
  \text{\textbf{Linear end}} & \text{\textbf{Nonlinear end}} \\[-1cm]
  \begin{matrix}
    q_m^-(0) \\ p_m^-(0) \text{ discarded}
  \end{matrix}
  \arrow[d, "\text{Remove fast motion}" pos=0.33,
            "\text{here $p_m(0) = w_m(0)$ is fast}" pos=0.67] &
  \begin{matrix}
    q_m^-(T) := q^* \\
    p_m^-(T) :=  p_m
  \end{matrix}
  \arrow[l, "\text{Backward integration}"',
            "w_m^- \text{ almost constant}"] \\ 
  \begin{matrix}
    q_m^+(0) := q_m^-(0) \\
    p_m^+(0) := 0
  \end{matrix}
  \arrow[r, "\text{Forward integration}",
            "w_m^+ \text{ almost zero}"'] &
  \begin{matrix}
    q_m^+(T) \text{ discarded} \\  p_m^+(T) =: p_{m+1}
  \end{matrix}
  \arrow[u, "\text{Restore basepoint}" pos=0.67,
  "\text{Jump in $(q_m,w_m)$ small if $T$ small}" pos=0.33,
  "\text{$m := m+1$}"' pos=0.5]
\end{tikzcd}
\caption{Schematic representation of the nudging cycle. The arrow
annotations indicate the behavior of the implicit fast variable $w_m$
to explain the different steps of the proof of
Proposition~\ref{PropEstimSeqAlg}. }
\label{f.scheme}
\end{figure}
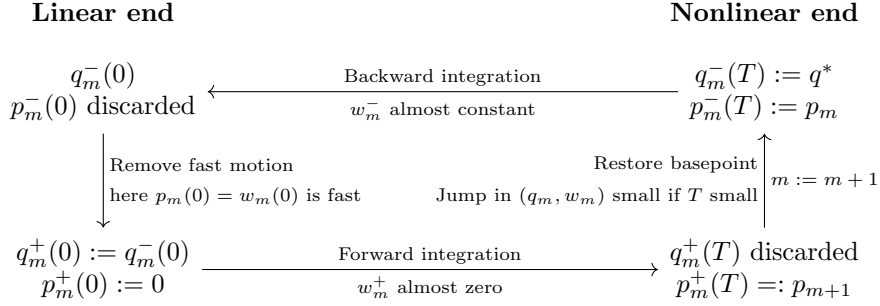

\begin{proposition} \label{PropEstimSeqAlg}
Assume that $\rho$ satisfies the algebraic order condition
\eqref{e.alg-order} of order $n \in \N$ at least at time $t=0$.  Fix
$R>0$, $T_1>0$, and $p_0, q^*\in \R^{2d}$ such that
\begin{equation}
  \label{e.ball}
  \max\big\{\|q^*\|, \|w_0\|\big\}
  \leq R \, \quad \text{for all } 0<\eps \leq T_1 \,.
\end{equation}
Then there exists $T_0 \in (0, T_1]$ and $\theta < 1$ such that for
any $0< \eps\leq T \leq T_0$, we have
\begin{equation}\label{wmMainEstim}
  \| w_{m+1} \| \leq \theta \, \| w_{m} \| + c \, \frac{\eps^n}{T^n} \,.
\end{equation}
The constants $\theta$ and $c$ may depend on $\rho$, $V$, $T_0$, and
$R$, but are independent of $T$ and $m$.
\end{proposition}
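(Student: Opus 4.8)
The plan is to follow the phase point once around the nudging cycle, controlling along the way the fast variable $w$ from the slow-fast form \eqref{eq:fs_ode_q}--\eqref{eq:fs_ode_w} and the deviation of $q$ from $q^*$ by Gronwall estimates, and reading off the contraction factor $\theta$ from the fact that each leg of the cycle has temporal length at most $T_0$. Two structural facts drive the argument. The first is that $\re^{sJ/\eps}$ is an isometry because $J$ is skew-symmetric, so the fast oscillation is controlled uniformly in $\eps$; this is what makes variation of constants in \eqref{eq:fs_ode_w} useful. The second is where the order condition enters: under \eqref{e.alg-order} at $t=0$, an induction on the recursion \eqref{e.recursiveq} shows $f_i(\cdot,0)\equiv0$ for $i=0,\dots,n$, hence $F_n(\cdot,0)\equiv0$ --- each term of $f_i$ is a product of a $V$-dependent factor with factors $\rho^{(j)}(t/T)/T^{\,j}$ of combined order $j\le i\le n$, and every such factor vanishes at $t=0$ since $\rho(0)=0$ and $\rho^{(j)}(0)=0$ for $1\le j\le n$. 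Consequently the reset at the linear end yields $w_m^+(0)=p_m^+(0)-F_n(q_m^+(0),0)=0$, so the forward leg is seeded with a fast variable that is already $O((\eps/T)^n)$-small rather than $O(\|w_m\|)$.

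Fix $r:=R+1$, let $C_1,C_2,C_3$ be the constants of Lemma~\ref{LemAlgFDFR} for the ball $B(0,r)$, and note that the proof of that lemma also gives uniform bounds $\|F_n\|\le C_0$ and $\|DF_n\|\le M_1$ on $B(0,r)\times[0,T]$, all constants independent of $\eps$ and $T$. Along any leg of length $\ell\le T_0$ starting from a fast value $w_\flat$ and on which $q$ stays in $B(0,r)$, variation of constants in \eqref{eq:fs_ode_w} with the isometry property and \eqref{AlgR} gives $\sup\|w\|\le\re^{M_1\ell}(\|w_\flat\|+C_3\,\ell\,(\eps/T)^n)$, and \eqref{eq:fs_ode_q} then bounds the displacement of $q$ along the leg by $\ell\,(C_0+\sup\|w\|)$. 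On the backward leg (started at $q^*$ with $w_\flat=w_m$) this gives $\sup\|w_m^-\|\le\re^{M_1T_0}(\|w_m\|+C_3T_0(\eps/T)^n)$; on the forward leg (started at $q_m^-(0)$ with $w_\flat=0$) it gives $\sup\|w_m^+\|\le\re^{M_1T_0}C_3T_0(\eps/T)^n=O((\eps/T)^n)$. Using $(\eps/T)^n\le1$ and the $q$-displacement bounds, both trajectories stay inside $B(0,r)$ once $T_0$ is small; the circularity (the $w$-bound needs $q\in B(0,r)$, the confinement of $q$ needs the $w$-bound) is broken either by a maximal-interval continuation argument or, cleanly, by applying the two-component Gronwall lemma recalled in the appendix to the pair $(\|q-q^*\|,\|w\|)$ directly.

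To extract the contraction, compare the two legs on $[0,T]$ through $\delta(t):=q_m^+(t)-q_m^-(t)$, which satisfies $\delta(0)=q_m^+(0)-q_m^-(0)=0$ and, since the drift $F_n$ is differenced between two solutions of the same equation and \eqref{AlgF} applies, $\|\dot\delta\|\le C_1\|\delta\|+\|w_m^+\|+\|w_m^-\|$; Gronwall then gives $\|q_m^+(T)-q^*\|=\|\delta(T)\|\le\re^{C_1T_0}T_0\,(\sup\|w_m^+\|+\sup\|w_m^-\|)$. Because $w_{m+1}=p_m^+(T)-F_n(q^*,T)=w_m^+(T)+\bigl(F_n(q_m^+(T),T)-F_n(q^*,T)\bigr)$, the Lipschitz bound \eqref{AlgF} yields
\[
  \|w_{m+1}\|\;\le\;\|w_m^+(T)\|+C_1\|q_m^+(T)-q^*\|\;\le\;\theta\,\|w_m\|+c\,\Bigl(\tfrac{\eps}{T}\Bigr)^{\!n},
\]
with $\theta=C_1\,\re^{(C_1+M_1)T_0}\,T_0$ and $c$ collecting the remaining terms, all of which carry a factor $(\eps/T)^n$ (they stem from the two $\|w_m^\pm\|$ source contributions and are in fact $O(T_0)\cdot(\eps/T)^n$). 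Choosing $T_0$ small makes $\theta<1$; then $\|w_m\|\le\bar w:=\max\{R,\,c/(1-\theta)\}$ is preserved under the recursion, which closes the induction hypothesis used in the leg estimates --- and $\bar w$ stays bounded as $T_0\to0$ since $c=O(T_0)$, so the ball-confinement conditions are genuinely met for $T_0$ small.

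The genuinely delicate point is this last bootstrap: the $w$-estimates on each leg presuppose that the $q$-trajectory remains in the ball where Lemma~\ref{LemAlgFDFR} is valid, while confining $q$ presupposes the $w$-estimates together with a uniform bound on $\|w_m\|$. Organizing it through the appendix's Gronwall inequality for systems of differential inequalities --- the careful two-component Gronwall --- is the clean route, and it is exactly there that uniform-in-$\eps$ control of the fast oscillation (the skew-symmetry of $J$) is indispensable; without it the factor $\re^{M_1\ell}$ would instead grow like $\re^{\ell/\eps}$ and the estimate would collapse as $\eps\to0$.
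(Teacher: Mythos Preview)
Your proof is correct and follows the same overall architecture as the paper's: control $w$ on each leg of the cycle via Gronwall, compare $q_m^+$ with $q_m^-$ to bound $\|q_m^+(T)-q^*\|$, and feed this into the Lipschitz bound for $F_n$ to extract $\theta$. The one genuine difference is in how the comparison is done. The paper sets up a coupled system of differential inequalities for the pair $(\|q_m^+-q_m^-\|,\|w_m^+-w_m^-\|)$ and invokes the matrix Gronwall lemma from the appendix to solve it; you instead bound $\sup\|w_m^\pm\|$ on each leg \emph{first} (by scalar Gronwall/variation of constants), then feed $\|w_m^+\|+\|w_m^-\|$ as a known source into a scalar Gronwall for $\delta=q_m^+-q_m^-$. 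Your route is more elementary and avoids the matrix Gronwall for the contraction estimate entirely; the paper's route is slightly sharper (it tracks the difference $w_m^+-w_m^-$ rather than the sum of norms) but this extra precision is not needed for the result. You also relocate the two-component Gronwall to the confinement/bootstrap step, whereas the paper handles that by a continuity argument and reserves the matrix lemma for the difference; either organization works.
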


\begin{proof}
The proof consists of 4 steps.  We initially do our analysis on a
single backward-forward nudging cycle where we assume that the analog
of condition \eqref{e.ball} is satisfied at the start of the cycle,
i.e.,
\begin{equation}\label{wmR}
 \| w_m \| \leq R \,.
\end{equation}
In the final step, we prove that this conditions is maintained when
going from one nudging cycle to the next.

Step 1: Obtain a bound on the generation of fast motion during
backward integration.  We work under the assumption that there exists
$T_0 \leq T_1$ such that
\begin{equation}
  \label{e.bound}
  \sup_{t\in [0, T_0]} \max \{ \|q_m^\pm(t) \|, \|w_m^\pm(t) \| \}
  \leq 2R \,.
\end{equation}
For the backward integration, by continuity, there is always $T_0$
small enough such that this condition is satisfied.  Using the
equations for $w_m^\pm$ and $q_m^\pm$ and bounding the nonlinear terms
via \eqref{e.bound}, we see that there exist $C_1, C_2$, and $C_3$,
depending only on $\rho$, $V$, $n$, $T_1$, and $R$, such that
\begin{subequations}
  \label{bfrw}
\begin{align}
  \pm\frac{\d}{\d t}\|q_m^{\pm}\| & \leq C_1 + \|w_m^{\pm}\| \,,
  \label{bfq} \\
  \pm\frac{\d}{\d t}\|w_m^{\pm}\|
  & \leq C_2 \, \|w_m^{\pm}\| +C_3 \, \frac{\eps^{n}}{T^{n}} \,,
  \label{bfw}
\end{align}
\end{subequations}
Integrating \eqref{bfw} backward in time, we find
\begin{align}
  \label{bwm}
  \sup_{t\in [0, T]} \| w_m^-(t) \|
  & \leq \re^{C_2T} \, \|w_m \|
    + \frac{C_3}{C_2} \, (\re^{C_2 T} -1) \, \frac{\eps^n}{T^n}
    \notag \\
  & \leq \re^{C_2T} \, R + \frac{C_3}{C_2} \, (\re^{C_2 T} -1) \,
    \frac{\eps^n}{T^n} \,.
\end{align}
Inserting this bound into \eqref{bfq} and integrating backward, we
find
\begin{equation}\label{firstT0}
  \sup_{t\in [0, T]} \| q_m^-(t) \| \leq R + T \, \biggl( C_1 +
  \re^{C_2T} \, R
  + \frac{C_3}{C_2}\left( \re^{C_2 T} -1\right) \, \frac{\eps^n}{T^n}
  \biggr) \,. 
\end{equation}
Thus, choosing $T_0$ small enough, \eqref{e.bound} is maintained also
at the start of the forward integration step, with a right-hand bound
of $3R/2$, say.  By continuity, possibly lowering the value of $T_0$,
\eqref{e.bound} will be maintained over the forward evolution, too.
To get an explicit bound, we integrate \eqref{bfw} forward.  As $\rho$
satisfies the algebraic order condition \eqref{e.alg-order} of order
$n$ at $t=0$, we have $w_m^+(0)=0$ and therefore
 \begin{equation}\label{fwm}
  \sup_{t\in [0, T]} \| w_m^+(t) \|
  \leq \frac{C_3}{C_2} \left( \re^{C_2 T} -1\right) \,
  \frac{\eps^n}{T^n} \,. 
\end{equation}
Finally, inserting this bound into \eqref{bfq} and integrating
forward, we obtain
\begin{equation}
  \sup_{t\in [0, T]} \| q_m^+(t) \|
  \leq R + 2T \, \biggl( C_1 +
  \re^{C_2T} \, R
  + \frac{C_3}{C_2}\left( \re^{C_2 T} -1\right) \, \frac{\eps^n}{T^n}
  \biggr) \,. 
\end{equation}

Step 2: Obtain a bound on the deviation of $q$ from the basepoint
$q^*$ after completing one full cycle.  Taking the difference between
the backward and forward equation for $q$, we have
\begin{equation*}
  \frac{\d}{\d t}(q_m^+-q_m^-) =
  F_{n}(q_m^+,t)-F_{n}(q_m^-, t) + w_m^+-w_m^- \,.
\end{equation*}
It follows that
\begin{equation}
  \label{e.r1r2_ineq}
  \frac{\d}{\d t}\|q_m^+-q_m^-\| \leq
  \|F_{n}(q_m^+,t)-F_{n}(q_m^-, t) \| + \|w_m^+-w_m^-\| \,.
\end{equation}
Using estimate \eqref{AlgF},  we find that there exists $\delta_1 = \delta_1(\rho, V, n , T_0,  R)$ such that 
\begin{equation}
  \label{e.rr_estimate}
  \frac{\d}{\d t}\|q_m^+-q_m^-\|
  \leq \delta_1 \, \|(q_m^+-q^-) \| + \|w_m^+-w_m^-\| \,.
\end{equation}
In the same manner, taking the difference between the backward and
forward equation for $w$, we have
\begin{align}
  \frac{\d}{\d t}(w_m^+-w_m^-)
  & = \frac{1}{\eps} \, J(w_m^+-w_m^-)
      \notag \\
  & \quad - (DF_{n}(q_m^+,t)w_m^+ -DF_{n}(q_m^-,t)w_m^-)
      + R_{n}(q_m^+) - R_{n}(q_m^-) \,.
\end{align}
Adding and subtracting $DF_{n}(q_m^+,t)w_m^-$ from the right hand side
and taking the dot product with $w_m^+-w_m^-$, we obtain
\begin{align}
  \frac{\d}{\d t}\|w_m^+-w_m^-\| 
  & \leq \|DF_{n}(q_m^+,t)\| \, \|w_m^+-w_m^-\|
       + \|DF_{n}(q_m^+,t)-DF_{n}(q_m^-,t)\| \|w_m^-\|
    \notag \\
  & \quad + \|R_{n}(q_m^+)\|+ \|R_{n}(q_m^-)\| \,.
\end{align}
The Lipschitz property \eqref{AlgDF} and remainder bound \eqref{AlgR}
then imply that there exist constants $\delta_2$, $\delta_3$, and
$\alpha$, only depending on $\rho$, $V$, $n$, $R$, and $T_0$, such
that
\begin{equation}
  \label{e.ww_estimate}
  \frac{\d}{\d t} \|w_m^+-w_m^-\|
  \leq \delta_2 \, \|w_m^+-w_m^-\|
       + \delta_3 \, \|q_m^+-q_m^-\|
       + \alpha \, \frac{\eps^{n}}{T^{n}} \,.
\end{equation}
Thus, \eqref{e.rr_estimate} and \eqref{e.ww_estimate} form a system of
differential inequalities which can be written in matrix form as
\begin{equation}\label{e.zz_estimate_for1}
  \dot{z}(t) \leq \delta A \, z(t) + K \,,
\end{equation}
where $\delta = \max \{ \delta_1,  \delta_2,\delta_3 \}$,
\begin{gather}
  z(t) =
  \begin{pmatrix}
    \|q_m^+(t)-q_m^-(t)\| \\ \|w_m^+(t)-w_m^-(t) \|
  \end{pmatrix} \,,
  \quad 
  A = 
  \begin{pmatrix}
    1 & 1\\ 1 & 1
  \end{pmatrix} \,,
  \quad \text{and }
  K =
  \begin{pmatrix}
    0 \\  \alpha \, (\eps/T)^n
  \end{pmatrix} \,,
\end{gather}
and where we read the inequality sign component-wise.  A Gronwall
inequality for systems, recalled in the appendix below, then shows
that
\begin{align}
  z(t)
  & \leq z(0) + t K
      + \delta A \int_0^t\re^{\delta (t-s)A} \, (z(0) + sK) \, \d s
      \notag \\
  & = U_1(\delta, t) \, z(0) + U_2(\delta, t) \, K \,,
  \label{e.zz_estimate_for2}
\end{align}
where the second step follows from
$A \, \re^{\delta (t-s)A} = A \, \re^{2 \delta(t-s)}$ with
\begin{subequations}
\begin{gather}
   U_1(\delta, t) = I + \frac 12 \, (\re^{2 \delta t} - 1) \, A
\intertext{and}
   U_2(\delta, t) = t \, \left(I - \frac 12 A\right)
   + \frac{\re^{2\delta t} - 1}{4\delta} \, A \,.
\end{gather}
\end{subequations}
As $q^+(0) = q^-(0)$, we then obtain
\begin{align}
  \|q_m^+(T) - q^*\|
  & = \|q_m^+(T) - q_m^-(T)\|
      \notag \\
  & \leq \frac 12 \, (\re^{2 \delta T} - 1) \, \|w_m^-(0)\|
      + \alpha \, \frac{\re^{2\delta T} - 2\delta T -1}{4\delta} \,
        \frac{\eps^{n}}{T^{n}} \,.
  \label{qEstim1}
\end{align}

Step 3: Translate the mismatch of the basepoint $q$ into a bound on
the fast variable $w$ at the start of the next nudging cycle.  By the
triangle inequality,
\begin{align}\label{Iter_wm}
 \| w_{m+1} \|
  & = \|p_m^+(T) - F_n(q^*,T)\|
      \notag \\
  & \leq\|w_m^+(T)\| + \|F_n(q_m^+(T),T) - F_n(q^*,T)\|
      \notag\\
  & \leq C \, \|q_m^+(T) - q^*\| + c \, \frac{\eps^{n}}{T^{n}}
      \notag\\
  & \leq  C \, \re^{C_2T} \, (\re^{2 \delta T} - 1) \, \|w_m\|
      + c(T) \, \frac{\eps^{n}}{T^{n}} \,.
\end{align}
Here, we used Lemma~\ref{LemAlgFDFR} together with \eqref{fwm} in the
second inequality and estimate \eqref{qEstim1} together with estimate
\eqref{bwm} in the third inequality.
A possible explicit formula for $c(T)$ is
\begin{equation}
  c(T) = \alpha_1 \, (\re^{2 \delta T} - 2\delta T -1)
  + \alpha_2 \, (\re^{2 C_2 T} - 1)
\end{equation}
where $\alpha_i = \alpha_i(\rho, V, n , T_1, R)$.  Since $C$, $C_2$,
$\delta$, and the $\alpha_i$ are uniform in $m \in \N$ and
$T\in (0, T_0]$, estimate \eqref{wmMainEstim} is satisfied by a
further lowering of $T_0$.

Step 4: We finally prove that $T_0$ can be chosen such that
\eqref{wmR} remains satisfied in all nudging cycles provided it is
satisfied initially.  Estimate \eqref{Iter_wm} implies, in particular,
that
\begin{equation}
  \| w_{m+1} \|
  \leq C \, \re^{C_2T} \, (\re^{2 \delta T} - 1) \, R + c(T) 
\end{equation}
with $\lim_{T \to 0} c(T) = 0$.  Thus, for $T_0$ again small enough,
we ensure $\| w_{m+1} \| \leq R$, which completes the proof.
\end{proof}

We now turn to the analog of Proposition~\ref{PropEstimSeqAlg} when
$V$ is analytic and $\rho \in G^2(0, 1)$.  Here, the truncation order
$n$ must be chosen optimally.  The scheme is initialized with
\begin{equation}
  w_0 = p_0 - F_n(q^*, T) \,,
\end{equation}
which apparently depends on $n$. However, choosing $n$ as in
Lemma~\ref{LemExpFDFR}, $F_n(q^*, T)$ can be bounded uniformly in
$\eps < T < T_1$ for some $T_1>0$.  This bound depends on $T_1$,
$\rho$, and $V$, in particular on the domain on which $V$ is analytic.
Thus, for a given $R>0$, fix $r_i$, $i = 1, 2, 3$, such that
$ 2R \equiv r' < r_1, r_2 < r_3 < r \equiv 3R$ and estimates
\eqref{ExpFDFR} are satisfied.  Then, by the same steps as in the
proof of Proposition~\ref{PropEstimSeqAlg}, the following is true.

\begin{proposition} \label{PropEstimSeqExp}
Assume that $\rho$ satisfies the exponential order condition
\eqref{e.exp-order} at least at time $t=0$.  Fix $R>0$, $S>0$,
$T_1>0$, and $p_0, q^*\in \R^{2d}$ such that $V$ is
analytic on $B(0, 3R)$,
\begin{equation}
  \|q^*\| \leq R \text{ and } \|w_0\|
  \leq S \quad \text{for all } 0<\eps < T \leq T_1 \,,
\end{equation}
where $n$, implicit in the definition of $w_0$, is chosen as a
function of $\eps$ and $T$ as in Lemma~\ref{LemExpFDFR}.  Then there
exists $T_0 \in (0, T_1]$ and $\theta < 1$ such that for any
$0< \eps\leq T \leq T_0$, we have
\begin{equation}\label{wmMainEstimExp}
  \| w_{m+1} \| \leq \theta \, \| w_{m} \|
  + d \, \re^{-c \sqrt[3]{\frac T \eps}} \,.
\end{equation}
The constants $d$ and $c$ may depend on $\rho$, $V$, $T_0$,  $S$ and
$R$, but are independent of $T$ and $m$.
\end{proposition}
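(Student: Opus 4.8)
The argument is a line-by-line repetition of the four steps in the proof of Proposition~\ref{PropEstimSeqAlg}, with Lemma~\ref{LemAlgFDFR} replaced throughout by Lemma~\ref{LemExpFDFR} and with the algebraic remainder $(\eps/T)^n$ replaced by the exponential remainder $\re^{-c\sqrt[3]{T/\eps}}$ wherever it occurs. The plan is to fix, once and for all, the truncation order $n$ as the function of $\eps$ and $T$ given by Lemma~\ref{LemExpFDFR}, and to carry this single choice unchanged through all of Steps~1--4. The reason the whole argument then goes through uniformly in $\eps/T$ is precisely the content of Lemma~\ref{LemExpFDFR}: the Lipschitz constants $C_1$, $C_2$ in \eqref{ExpF}--\eqref{ExpDF} and the remainder constant $C_3$ in \eqref{ExpR} are independent of $n$ once $n$ is slaved to $\eps/T$ in this way. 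Likewise, $\|F_n(q^*,T)\|$ is bounded uniformly in $0 < \eps < T \le T_1$, as noted before the statement, so the hypothesis $\|w_0\| \le S$ is meaningful despite the $n$-dependence hidden in the definition of $w_0$.

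\textbf{Steps 1 and 2.} Assuming $\|w_m\| \le S$ at the start of a cycle, the a priori argument of Step~1 of Proposition~\ref{PropEstimSeqAlg} shows that, for $T_0$ small enough, the backward and then forward trajectory satisfies $q_m^\pm(t) \in B(0,2R) = B(0,r')$ and $\|w_m^\pm(t)\| \le 2S$ for all $t \in [0,T]$; here the choice $r' = 2R < r = 3R$ provides the slack needed to keep the $q$-component inside the ball on which $V$ is analytic, so that Lemma~\ref{LemExpFDFR} applies. On this set the differential inequalities \eqref{bfq}--\eqref{bfw} hold with $(\eps/T)^n$ replaced by $\re^{-c\sqrt[3]{T/\eps}}$, and integrating \eqref{bfw} backward and then forward yields the analogs of \eqref{bwm} and \eqref{fwm} with the exponential remainder; we use here $w_m^+(0) = 0$, which holds because $\rho$ satisfies the exponential order condition at $t=0$. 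Step~2 is then unchanged: the differences $q_m^+-q_m^-$ and $w_m^+-w_m^-$ satisfy the two-component system \eqref{e.zz_estimate_for1}, now with the bounds \eqref{ExpF}, \eqref{ExpDF}, \eqref{ExpR}, so that $K$ has second component $\alpha\,\re^{-c\sqrt[3]{T/\eps}}$; the Gronwall inequality for systems gives \eqref{e.zz_estimate_for2} verbatim, and using $q_m^+(0) = q_m^-(0)$ together with the Step~1 bound on $\|w_m^-(0)\|$ produces the analog of \eqref{qEstim1}, namely $\|q_m^+(T)-q^*\| \le \tfrac12(\re^{2\delta T}-1)\,\|w_m^-(0)\|$ plus a constant times $\re^{-c\sqrt[3]{T/\eps}}$.

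\textbf{Steps 3 and 4.} The triangle-inequality computation \eqref{Iter_wm}, with Lemma~\ref{LemExpFDFR} in place of Lemma~\ref{LemAlgFDFR}, then gives $\|w_{m+1}\| \le C\,\re^{C_2T}(\re^{2\delta T}-1)\,\|w_m\| + d(T)\,\re^{-c\sqrt[3]{T/\eps}}$ with $d(T)$ bounded as $T\to 0$; shrinking $T_0$ makes the prefactor of $\|w_m\|$ less than some $\theta < 1$, which is \eqref{wmMainEstimExp}. Feeding $\|w_m\| \le S$ back into this estimate and shrinking $T_0$ once more so that $\theta S + d(T_0) \le S$ closes the induction that maintains the a priori bound $\|w_m\| \le S$ through every cycle, completing the proof.

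The only genuinely new point compared with the algebraic case is the bookkeeping of the $\eps/T$-dependence: one must check that every constant generated in Steps~1--4 --- the $C_i$, the $\delta_i$, $\alpha$, and the $\alpha_i$ entering $c(T)$ --- can still be taken uniform in $T \in (0,T_0]$ and $m$ after $n$ has been made a function of $\eps/T$. For the constants coming from $F_n$, $DF_n$, and $R_n$ this is exactly Lemma~\ref{LemExpFDFR}; the remaining constants are geometric and Gronwall-type constants that never depended on $n$. Hence no new analytic difficulty arises, and this uniformity is the crux of the argument.
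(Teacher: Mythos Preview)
Your proposal is correct and follows the same approach as the paper, which simply states that the result holds ``by the same steps as in the proof of Proposition~\ref{PropEstimSeqAlg}'' after fixing $r'=2R$, $r=3R$, and choosing $n$ as in Lemma~\ref{LemExpFDFR}. Your write-up makes explicit exactly the substitutions the paper leaves implicit, and you correctly isolate the only genuinely new point: the uniformity of all constants in $\eps/T$ once $n$ is slaved to $\eps/T$ via Lemma~\ref{LemExpFDFR}. One small slip: in Step~4 you need $d(T)\to 0$ as $T\to 0$ (not merely bounded) to close the induction by shrinking $T_0$; this does hold, since $d(T)$ has the same structure as $c(T)$ in the algebraic case.
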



Propositions~\ref{PropEstimSeqAlg} and~\ref{PropEstimSeqExp} already
suffice to prove quasi-convergence of the nudging sequence.  However,
we would like to assert that the sequence of nudging iterates not only
quasi-converges to some limit, but that this limit is representative
of the slow manifold of the \emph{original} problem
\eqref{e.finite_Hamilt}, not that of the ramped problem
\eqref{e.rHamilt}.  This requires imposing an order condition not only
at $t=0$ but also at $t=T$.  Our final results can be stated in the
following form.

\begin{theorem}[Algebraic quasi-convergence]
\label{e.maintheoremAleg}
Assume that $\rho$ satisfies the algebraic order condition
\eqref{e.alg-order} of order $n \in \N$.  Suppose
$V \in C^{n+1}(B(0, 2R))$ for some $R>0$ and pick
$q^*, p_0 \in \R^{2d}$ and $T_1 >0$ such that
\begin{equation}
  \max \{ \|q^*\|, \|p_0 - G_n(q^*) \| \} \leq R
  \quad \text{for all } 0< \eps \leq T_1 \,.
\end{equation}
Choose $T_0 \in (0,T_1]$ such that the main estimate
\eqref{wmMainEstim} from Proposition~\ref{PropEstimSeqAlg} is
satisfied for all $\eps <T \leq T_0$.  Then there is a constant
$C = C(\rho, V, n, T, R)$ such that the sequence of nudging iterates
quasi-converges to a point $G_{n}(q^*)$ on the approximate slow
manifold of order $n$ in the sense that
\begin{equation}
  \label{e.pm_estimateAlg}
  \limsup_{m \to \infty} \|p_m-G_{n}(q^*)\| \leq C \, \eps^{n} \,.
\end{equation}  
\end{theorem}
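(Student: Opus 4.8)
The plan is to derive the theorem as a short corollary of Proposition~\ref{PropEstimSeqAlg}. The recursion \eqref{wmMainEstim} is a contraction-with-source inequality of the form $\|w_{m+1}\| \leq \theta\,\|w_m\| + c\,(\eps/T)^n$ with $\theta < 1$. First I would iterate this: by induction on $m$, $\|w_m\| \leq \theta^m\,\|w_0\| + c\,(\eps/T)^n\,(1-\theta)^{-1}$, so that taking $\limsup_{m\to\infty}$ kills the transient term and leaves $\limsup_m \|w_m\| \leq c\,(1-\theta)^{-1}\,(\eps/T)^n$. Since $T \leq T_0$ is fixed (not sent to zero), the factor $T^{-n}$ is just an $\eps$-independent constant, so this already gives $\limsup_m \|w_m\| \leq C'\,\eps^n$.

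The remaining work is to translate this bound on the fast variable $w_m$ (the residual relative to the \emph{ramped} asymptotic manifold $F_n(q^*,T)$) into a bound on $p_m - G_n(q^*)$ (the residual relative to the \emph{original} slow manifold). By definition $p_m = w_m + F_n(q^*,T)$, so $\|p_m - G_n(q^*)\| \leq \|w_m\| + \|F_n(q^*,T) - G_n(q^*)\|$, and it suffices to show $\|F_n(q^*,T) - G_n(q^*)\| = O(\eps^n)$. This is exactly where the order condition at $t=T$ enters: by \eqref{e.alg-order}, $\rho^{(i)}(1) = 0$ for $i = 1,\dots,n$, hence at $t=T$ we have $\rho(T/T) = 1$ and $\partial_t^i[\rho(t/T)]|_{t=T} = 0$ for $i=1,\dots,n$. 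Comparing the recursions \eqref{e.recursive_g} and \eqref{e.recursiveq}: $f_0(q,T) = -\rho(1)\,J\nabla V(q) = g_0(q)$, and inductively, since every extra term in the $f_k$ recursion relative to the $g_k$ recursion carries a factor $-J\partial_t f_{k-1}$ which, evaluated at $t=T$, involves only derivatives $\rho^{(i)}$ for $i \geq 1$ that vanish there, one gets $f_k(q,T) = g_k(q)$ for $k = 0,\dots,n$. Therefore $F_n(q^*,T) = G_n(q^*)$ exactly, and the second term drops out entirely.

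Putting the pieces together: $\limsup_m \|p_m - G_n(q^*)\| \leq \limsup_m \|w_m\| \leq c\,(1-\theta)^{-1}\,T_0^{-n}\,\eps^n =: C\,\eps^n$, with $C$ depending only on $\rho$, $V$, $n$, $T$, $R$ through the constants furnished by Proposition~\ref{PropEstimSeqAlg}, which is the claimed estimate \eqref{e.pm_estimateAlg}. One should also check consistency of the hypotheses: the assumption $\|p_0 - G_n(q^*)\| \leq R$ together with $F_n(q^*,T) = G_n(q^*)$ gives $\|w_0\| = \|p_0 - F_n(q^*,T)\| = \|p_0 - G_n(q^*)\| \leq R$, so condition \eqref{e.ball} of the proposition holds and the proposition applies verbatim.

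The main obstacle is the identity $f_k(q,T) = g_k(q)$ for $k \leq n$. It is the only genuinely new computation, and one must be careful that the induction really closes: the term $-J\partial_t f_{k-1}$ in \eqref{e.recursiveq}, expanded by Leibniz, produces a finite sum of products of $\rho$-derivatives $\rho^{(j)}(t/T)$ (times powers of $T^{-1}$) with spatial-derivative factors of $V$; at $t=T$ every such product that contains at least one factor $\rho^{(j)}$ with $j\geq 1$ vanishes by the order condition, and one must verify that \emph{every} term in $\partial_t f_{k-1}$ for $k-1 \leq n-1$ contains at least one such factor — which follows because $f_{k-1}$ itself, for $k-1 \geq 1$, is built from $-J\partial_t f_{k-2}$ and products $Df_i f_j$, so that differentiating in $t$ always lands a derivative on some $\rho$-factor (the base term $f_0$ being the only one with an undifferentiated $\rho$, and $\partial_t f_0$ carries $\rho'$). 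This is the finite-dimensional analogue of the integration-by-parts argument already carried out for the scalar example in \eqref{e.ibp}, so the mechanism is understood; only the bookkeeping for the vector recursion needs care.
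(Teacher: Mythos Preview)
Your proposal is correct and follows essentially the same route as the paper: invoke the contraction inequality \eqref{wmMainEstim} from Proposition~\ref{PropEstimSeqAlg}, pass to the $\limsup$, and use the order condition at $t=T$ to identify $F_n(q^*,T)=G_n(q^*)$ so that $\|p_m-G_n(q^*)\|=\|w_m\|$. The paper simply asserts the identity $F_n(q^*,T)=G_n(q^*)$ in one line, whereas you supply the inductive bookkeeping; your verification that every term of $\partial_t f_{k-1}$ carries a factor $\rho^{(j)}$ with $1\le j\le k\le n$ (hence vanishes at $t=T$) is the right mechanism, and your consistency check that $\|w_0\|\le R$ follows from the hypothesis is a useful addition.
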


\begin{proof}
The order condition \eqref{e.alg-order} implies that
$G_{n}(q^*) = F_{n}(q^*, T)$.  Thus, by
Proposition~\ref{PropEstimSeqAlg},
\begin{equation}
  \|p_{m+1}-G_{n}(q^*)\|
  = \|w_{m+1}\|
  \leq \theta \, \|w_m\| + c \, \frac{\eps^n}{T^n}
  = \theta \, \|p_m-G_{n}(q^*, T)\| + c \, \frac{\eps^n}{T^n} \,.
\end{equation}
Taking the limsup of this estimate, we find that
\begin{equation}
  \limsup_{m \to \infty} \|p_m-G_n(q^*) \| \leq
  \frac{c}{1- \theta} \, \frac{\eps^{n}}{T^n} \,,
\end{equation}
which concludes the proof.
\end{proof}

The corresponding statement for exponential quasi-convergence is the
following, its proof completely analogous to the previous.

\begin{theorem}[Exponential quasi-convergence]
\label{e.maintheoremExp}
Assume that $\rho$ satisfies the exponential order condition
\eqref{e.exp-order}.   Fix $R>0$, $S>0$,
$T_1>0$, and $p_0, q^*\in \R^{2d}$ such that $V$ is
analytic on $B(0, 3R)$,
\begin{equation}
  \|q^*\| \leq R \text{ and } \|p_0 - G_n(q^*) \| \leq S 
  \quad \text{for all } 0<\eps < T \leq T_1 \,,
\end{equation}
where $n$ is chosen as a function of $\eps$ and $T$ as in
Lemma~\ref{LemExpFDFR}.  Choose $T_0 \in (0,T_1]$ such that the main
estimate \eqref{wmMainEstimExp} from Proposition~\ref{PropEstimSeqExp}
is satisfied.  Then there are positive constants $C = C(\rho, V, T)$
and $ D = D(\rho, V, T)$ such that
\begin{equation}
  \label{e.pm_estimateExp}
  \limsup_{m\to \infty} \|p_m(T)-G_{n}(q^*)\|
  \leq D \, \re^{-C \eps^{-\frac 13}} 
\end{equation}
for all $\eps < T \leq T_0$.
\end{theorem}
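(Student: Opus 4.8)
The plan is to reduce the statement to Proposition~\ref{PropEstimSeqExp} in a single step, in exactly the way Theorem~\ref{e.maintheoremAleg} was reduced to Proposition~\ref{PropEstimSeqAlg}, and then to rewrite the resulting cube-root bound purely in terms of $\eps$.

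First I would verify that the hypotheses of Proposition~\ref{PropEstimSeqExp} are in force and record the endpoint identity $F_n(q^*, T) = G_n(q^*)$. This identity is where the \emph{exponential} order condition at $t = T$ is used, beyond the condition at $t=0$ that Proposition~\ref{PropEstimSeqExp} itself needs: by induction on $k$ along the two recursions \eqref{e.recursive_g} and \eqref{e.recursiveq} one shows that $f_k(q,T) = g_k(q)$ and $\partial_t^j f_k(q,T) = 0$ for all $j \ge 1$, the point being that every $t$-derivative surviving at $t=T$ carries a factor $\rho^{(i)}(1)$ with $i \ge 1$, which vanishes. Summing then gives $F_n(q,T) = \sum_{i=0}^n f_i(q,T)\,\eps^i = \sum_{i=0}^n g_i(q)\,\eps^i = G_n(q)$ for the optimally chosen truncation order $n = n(\eps,T)$ of Lemma~\ref{LemExpFDFR}. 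In particular $w_0 = p_0 - F_n(q^*,T) = p_0 - G_n(q^*)$, so $\|w_0\| \le S$ is precisely the assumed bound; together with $\|q^*\| \le R$ and analyticity of $V$ on $B(0, 3R)$, all hypotheses of Proposition~\ref{PropEstimSeqExp} hold.

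Next I would invoke Proposition~\ref{PropEstimSeqExp}. Using $p_{m+1} = p_m^+(T)$ and the identification $w_m = p_m - G_n(q^*)$ for all $m \ge 0$ (valid by the previous step), estimate \eqref{wmMainEstimExp} reads
\[
  \|p_{m+1} - G_n(q^*)\|
  = \|w_{m+1}\|
  \le \theta \, \|w_m\| + d \, \re^{-c\sqrt[3]{T/\eps}}
  = \theta \, \|p_m - G_n(q^*)\| + d \, \re^{-c\sqrt[3]{T/\eps}}
\]
for all $0 < \eps \le T \le T_0$. Iterating this affine recursion with contraction factor $\theta < 1$ and passing to the limit superior, the geometric series sums to
\[
  \limsup_{m\to\infty} \|p_m - G_n(q^*)\|
  \le \frac{d}{1-\theta} \, \re^{-c\sqrt[3]{T/\eps}} \,.
\]

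Finally, with $T$ fixed in $(0, T_0]$ and $\eps < T$, I would write $\sqrt[3]{T/\eps} = T^{1/3}\,\eps^{-1/3}$ and set $C = c\,T^{1/3}$ and $D = d/(1-\theta)$; both inherit dependence only on $\rho$, $V$, $T$ (and the fixed data $R$, $S$, $T_0$) from the constants of Proposition~\ref{PropEstimSeqExp}, which gives \eqref{e.pm_estimateExp}. I do not expect a genuine obstacle here: the analytic content sits entirely in Proposition~\ref{PropEstimSeqExp}, itself patterned on Proposition~\ref{PropEstimSeqAlg}, and the only points requiring care are the inductive proof of the endpoint identity $F_n(q^*,T) = G_n(q^*)$ — the sole place the order condition at $t=T$ (rather than merely at $t=0$) enters — and the cosmetic but necessary absorption of the fixed factor $T^{1/3}$ into the exponential rate.
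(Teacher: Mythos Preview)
Your proposal is correct and follows exactly the route the paper intends: it states that the proof is ``completely analogous'' to that of Theorem~\ref{e.maintheoremAleg}, namely use the order condition at $t=T$ to identify $F_n(q^*,T)=G_n(q^*)$, apply Proposition~\ref{PropEstimSeqExp}, and pass to the limsup of the resulting contraction. Your added remarks on the inductive verification of $f_k(\cdot,T)=g_k$ and the absorption of $T^{1/3}$ into the exponential rate are spelled out in more detail than the paper gives, but the argument is the same.
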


\begin{remark}
  $G_n$ in \eqref{e.pm_estimateExp} is a truncated asymptotic series
  where the order of truncation grows like $\eps^{-1/3}$.  The
  exponent $\tfrac13$ may not be sharp, but its growth must be
  sublinear in $\eps^{-1}$, in contrast with the scaling of the
  optimal truncation for the original fast-slow system, which goes
  like $\eps^{-1}$ and yields an approximation to the slow manifold to
  $O(\exp(-c/\eps))$, see, e.g.,
  \cite{CotterR:2006:SemigeostrophicPM}.  $G_n$ is difficult to
  compute and need not be known when applying the optimal balance
  method in practice.  Still, \eqref{e.pm_estimateExp} expresses that
  the quasi-limit of the nudging sequence, i.e., the state returned by
  the optimal balance method, is exponentially well-balanced.
\end{remark}

\section{Discussion}

Our results show that, in order to guarantee quasi-convergence, one
first has to choose the ramp time $T$ sufficiently small.
Quasi-convergence then holds for all $\eps \leq T$, but to ensure
small remainders, we actually need $\eps \ll T$.  This may seem
restrictive, but in practice, reasonably large values of $T$,
comparable to the natural time scales of the slow motion, work just
fine \cite{Masur2022,MasurOliver2020,Viudez:04}.

One place where our estimates are over-conservative is the bound
\eqref{e.bound} which we apply equally to the $q$ and $w$ components
of the transformed system.  As the iterations progress, $w$ will get
successively smaller, so tracking separate bounds for $q$ and $w$ would
improve the constants.  Thus, it may be possible to start with $T$
small and increase it as the iterations progress.  This might improve
the basin of quasi-convergence as well as computational efficiency.
Alternatively, the basin of convergence may be extended by using
damped nudging updates of the form
$p_{m+1} = p_m + \alpha \, (p_m^+(T) - p_m)$ with $\alpha \in (0,1)$.
Here, once again, our practical experience is that the algorithm is
rather robust and does not usually require such measures, but there
may certainly be situations where they could help.

Our method of proof only gives an upper bound on the termination
residual, but does not prove that true convergence is impossible.  It
suggests, however, that quasi-convergence is the best we should hope
for, for the following reason: Both the termination residual and the
balance error arise from the same mechanism, the spurious excitation
of fast degrees of freedom via the late terms of a diverging
asymptotic series over an $O(1)$-interval of slow time.  As such, we
can control their amplitude, but their phases will typically depend
sensitively on the initial state of a nudging cycle, and have no
reason to converge.  Thus, the nudging iterates land in a ball whose
radius is controlled by the amplitude of the spurious fast motion
while the uncertainty of the landing point within the ball is due to
the unstable fast phases.  As a rule of thumb, we should expect
quasi-convergence for systems that do not possess truly invariant slow
manifolds.

The test problem used in this paper is a Hamiltonian system with a
noncanonical symplectic matrix which structurally resembles the
equations for rapidly rotating fluid flow, cf.\ the discussion in
\cite{CotterR:2006:SemigeostrophicPM}, albeit bypassing the
difficulties related to the functional setting and time horizon of
existence of solutions for the full fluid equations.  As such, it is a
paradigm for a class of systems of interest in geophysical fluid
dynamics.  Existence of approximately invariant slow manifolds for
certain Hamiltonian systems with finitely many slow degrees of
freedom, without restrictions on the number of fast degrees of
freedom, has been proved in \cite{KristiansenW:2016:ExponentialES}.
The class of systems considered there differs from our model problem
as the authors assume a symplectic matrix that is block-diagonal with
respect to the splitting into slow and fast variables.  However, the
only structural feature we really use is that the fast subsystem is
skew to leading order in the scale separation parameter.  We therefore
conjecture that optimal balance, as well as the analysis presented
here, extends not only to symplectic slow manifolds as in
\cite{KristiansenW:2016:ExponentialES}, but to a much larger class of
systems where, in particular, the slow subsystem is not structurally
constrained.  Including infinite dimensional dynamical systems will
bring its own set of challenges, but progress should be possible along
the lines of \cite{KristiansenW:2016:ExponentialES,
MohamadO:2018:HsCC, MohamadO:2019:DirectCS}.

Another point of interest is the choice of time integration scheme.
Solving the optimal balance boundary value problem \emph{exactly}
requires accuracy on the fast time scale, which can be very costly
when the scale separation is strong.  With the nudging solver, our
experience indicates that it suffices to choose a time integration
scheme that is stable with a time step selected to ensure accuracy on
the slow time scale, making the method more computationally feasible.
A precise understanding of this phenomenon may be possible along the
lines of this paper, but remains outside of the present scope.

\section*{Acknowledgments}

We thank the referees for careful reading and many insightful comments
that helped improve the presentation.  In particular, the suggestion
to include the example in Section~\ref{s.ob} and the conjecture that a
damped nudging update can improve the basin of convergence are due to
the referees.

This paper is a contribution to the Collaborative Research Center TRR
181 ``Energy Transfers in Atmosphere and Ocean'' funded by the
Deutsche Forschungsgemeinschaft (DFG, German Research Foundation)
under project number 274762653. GTM received further support through
the Collaborative Research Center TRR 301 ``The Tropopause Region in a
Changing Atmosphere'' funded by the Deutsche Forschungsgemeinschaft
(DFG, German Research Foundation) under project number 428312742. HM
was further supported by German Research Foundation grant OL-155/6-2.

\section*{Appendix}

In the following, we recall a Gronwall inequality for systems which
was proved in \cite{ChandraD:1976:LinearGG} and put this theorem into
a form which can be used to directly obtain estimate
\eqref{e.zz_estimate_for2}.

\begin{theorem} \label{t.chandra}
Fix $n \in N^*$ and let $G(t)$ and $H(t)$ be real-valued, continuous,
nonnegative $n\times n$ matrices.  Let further $z(t), a(t)\in \R^n$ be
continuous such that
\begin{equation}
  z(t) \leq a(t) + G(t) \int_0^t H(s) \, z(s) \, \d s \,.
\end{equation}
Then,
\begin{equation}
  z(t) \leq a(t) + G(t) \int_0^t V(t, s) \, H(s) \, a(s) \, \d s
\end{equation}
where 
\begin{equation}
  \label{Vttau}
  V(t, \tau) = I + \int_\tau^t H(s) \, G(s) \, V(s,  \tau) \, \d s \,.
\end{equation}
The inequalities above are satisfied component-wise.
\end{theorem}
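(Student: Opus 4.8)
The plan is to deduce Theorem~\ref{t.chandra} (the system version of the Chandra--Davis inequality) from the classical scalar Gronwall inequality applied componentwise, after reducing the hypothesis to a self-consistent integral bound. First I would observe that the statement is a pure consequence of the fact that all matrices are nonnegative and all inequalities are read componentwise, so monotonicity is available: if $0 \le u(t) \le v(t)$ componentwise, then $G(t)\int_0^t H(s)\,u(s)\,\d s \le G(t)\int_0^t H(s)\,v(s)\,\d s$. The idea is to define $V(t,\tau)$ by the Volterra integral equation \eqref{Vttau}, note that it is nonnegative (its Neumann/Peano series has only nonnegative terms since $H$ and $G$ are nonnegative), and then verify that the claimed bound is a fixed point of the operator $z \mapsto a + G\int_0^{\cdot} H z$, so that iteration of the hypothesis converges to it.

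Concretely, I would proceed as follows. Let $b(t) \equiv a(t) + G(t)\int_0^t V(t,s)\,H(s)\,a(s)\,\d s$ be the claimed right-hand side. The key computation is to check that
\begin{equation}
  a(t) + G(t)\int_0^t H(s)\,b(s)\,\d s = b(t) \,,
\end{equation}
which, after substituting the definition of $b$ and exchanging the order of the double integral, reduces to the identity \eqref{Vttau} characterizing $V$. Once $b$ is known to be a fixed point of $\Phi z \equiv a + G\int_0^{\cdot} H z$, the hypothesis $z \le \Phi z$ together with monotonicity of $\Phi$ gives $z \le \Phi z \le \Phi^2 z \le \cdots$, and each iterate is dominated by $b$ provided the initial iterate is; one then shows $\Phi^k z \to b$ (or, more cleanly, shows directly by induction on $k$ that the partial Neumann sums for $b$ dominate the $k$-fold iterate of the hypothesis). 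Equivalently, and perhaps more transparently, I would set $e(t) = b(t) - z(t) \ge$ the quantity $G(t)\int_0^t H(s)(b(s)-z(s))\,\d s$ after subtracting, reducing to showing that a nonnegative solution of $e \ge G\int_0^{\cdot} H e$ with... — actually the sign goes the wrong way there, so I would instead stick with the iteration/Neumann-series argument, which is the standard route and is exactly what Chandra and Davis do.

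For the alternative route that more directly matches how we use the result, one may also diagonalize or simply note that in our application $G(t) = H(t) = A$ is the constant rank-one matrix $\begin{pmatrix}1&1\\1&1\end{pmatrix}$, so $A\re^{s A} = A\re^{2s}$ and the resolvent $V$ is explicit; but for the general theorem the cleanest argument is the componentwise Gronwall-with-memory iteration. The main obstacle, such as it is, is the bookkeeping in the Fubini exchange that turns the defining integral equation for $V$ into the statement that $b$ is a fixed point of $\Phi$ — one has to be careful that $V(t,s)$ depends on the upper limit $t$, so the exchange of order must be done with the region $\{0 \le s \le \sigma \le t\}$ kept straight. Everything else (nonnegativity of $V$ via its Neumann series, monotonicity of $\Phi$, convergence of the iteration) is routine once that identity is in hand.
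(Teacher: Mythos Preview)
The paper does not actually prove Theorem~\ref{t.chandra}; it merely states the result and cites Chandra and Davis \cite{ChandraD:1976:LinearGG} for the proof, so there is no ``paper's own proof'' to compare against.

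Your outline is essentially the standard argument and is correct in substance: the verification that $b(t)=a(t)+G(t)\int_0^t V(t,s)H(s)a(s)\,\d s$ is a fixed point of $\Phi u = a + G\int_0^{\cdot} H u$ reduces, after the Fubini exchange over $\{0\le\sigma\le s\le t\}$, exactly to the defining relation \eqref{Vttau} for $V$; nonnegativity of $V$ follows from its Neumann series since $H,G\ge 0$; and then $z\le\Phi z$ plus monotonicity gives $z\le\Phi^k z\to b$. One caution: the remark ``each iterate is dominated by $b$ provided the initial iterate is'' is circular, since $z\le b$ is precisely the conclusion. The clean way to close the loop is the one you mention in passing: write $\Phi^k z = \sum_{j=0}^{k-1} L^j a + L^k z$ with $L u = G\int_0^{\cdot} H u$, observe that the Volterra bound $\lVert L^k\rVert \le C^k T^k/k!$ on $C([0,T];\R^n)$ forces $L^k z\to 0$, and that $\sum_{j\ge 0} L^j a = b$, so $z\le \Phi^k z\to b$. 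The parenthetical ``partial Neumann sums dominate the $k$-fold iterate'' is not quite right as stated (they differ by $L^k z$, whose sign is not controlled), but the convergence version just described is what you need and what Chandra--Davis effectively do.
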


Let $A$ denote the $n\times n$ matrix whose components are all ones
and let $\delta >0$.  It is easy to check that $\delta A$ is
nonnegative.  Indeed, given $X = (x_1, \dots, x_n)^T \in \R^n$, we
have
\begin{equation}
  X^T AX
  = \sum_{k=1}^n x_k \, (AX)_k
  = \sum_{k=1}^n x_k \sum_{k=1}^n x_j
  = \biggl( \sum_{k=1}^n x_k \biggr)^2
  \geq 0 \,.
\end{equation}
Then, writing
\begin{gather}
  G(t) = \delta A \,, \quad
  H(t) = I \,, \quad
  \text{and } V(t, \tau) = \re^{\delta(t-\tau) A} \,,
\end{gather}
we easily check that $V(t, \tau)$ satisfies \eqref{Vttau}.
Theorem~\ref{t.chandra} then implies the following.

\begin{corollary}
Let $\delta >0$, $K\in \R^n$, and $z(t) \in \R^n$ be differentiable
such that
\begin{equation}
  z'(t) \leq \delta A \, z(t) + K \,.
\end{equation}
Then
\begin{gather}
  z(t) \leq z(0) + t K
  + \delta A \int_0^t \re^{\delta (t-s)A} \, (z(0) + sK) \, \d s \,.
\end{gather}
\end{corollary}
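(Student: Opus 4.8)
The plan is to reduce the stated differential inequality to the integral inequality covered by Theorem~\ref{t.chandra} and then specialize the resulting formula. First I would integrate the hypothesis $z'(t) \leq \delta A\, z(t) + K$ from $0$ to $t$; since the inequality is read component-wise and integration is monotone on each component, this yields
\begin{equation*}
  z(t) \leq z(0) + tK + \delta A \int_0^t z(s) \, \d s \,.
\end{equation*}
This is precisely the hypothesis of Theorem~\ref{t.chandra} with $a(t) = z(0) + tK$, $G(t) = \delta A$, and $H(t) = I$: both $G$ and $H$ are continuous and nonnegative (as verified above for $\delta A$, and trivially for $I$), and $a$ is continuous, so the theorem applies.

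Next I would identify the resolvent kernel $V(t,\tau)$ from \eqref{Vttau}. With $H \equiv I$ and $G \equiv \delta A$, equation \eqref{Vttau} becomes $V(t,\tau) = I + \int_\tau^t \delta A\, V(s,\tau) \, \d s$, whose solution is $V(t,\tau) = \re^{\delta(t-\tau)A}$, as already checked in the discussion preceding the statement (alternatively, differentiate in $t$ and match the initial condition $V(\tau,\tau) = I$). Substituting into the conclusion of Theorem~\ref{t.chandra} gives
\begin{align*}
  z(t) &\leq a(t) + G(t) \int_0^t V(t,s) \, H(s) \, a(s) \, \d s \\
       &= z(0) + tK + \delta A \int_0^t \re^{\delta(t-s)A} \, (z(0) + sK) \, \d s \,,
\end{align*}
which is the claimed bound.

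There is no genuine obstacle here; the only points needing a word of justification are that integrating a component-wise differential inequality preserves it (monotonicity of the integral), and that $\re^{\delta(t-\tau)A}$ is the unique solution of the Volterra equation \eqref{Vttau} — the latter already recorded above. If one preferred to avoid invoking Theorem~\ref{t.chandra} altogether, the same estimate could be obtained directly by iterating the integral inequality (a Neumann/Picard series argument), whose series sums to the matrix exponential; but piggybacking on the appendix theorem is the cleanest route and is exactly the form \eqref{e.zz_estimate_for2} used in the main text.
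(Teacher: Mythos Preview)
Your proof is correct and follows the paper's approach: integrate the differential inequality to obtain the integral form required by Theorem~\ref{t.chandra} with $a(t)=z(0)+tK$, $G=\delta A$, $H=I$, identify $V(t,\tau)=\re^{\delta(t-\tau)A}$, and read off the conclusion. The paper compresses all of this into the short paragraph preceding the corollary; your write-up simply makes explicit the integration step that the paper leaves implicit.
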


\bibliographystyle{siam}
\bibliography{references}

\end{document}